\newtheorem{theorem}{Theorem}[section]
\newtheorem{lemma}[theorem]{Lemma}
\newtheorem{corollary}[theorem]{Corollary}
\title{Graph}
\author{Jie$^{a}$\thanks{Corresponding author. Email:~jie\_xue@126.com (J. Xue).}
\\
{\footnotesize $^a$ East China University, Shanghai, China}}
\date{} 
\def\qed{\hfill$\Box$\vspace{12pt}}
\title{{\bf Graphs with small diameter determined by their $D$-spectra}\thanks{Corresponding author:~Ruifang Liu. E-mail address:~rfliu@zzu.edu.cn(R. Liu). Supported by NSFC (11571323), Outstanding Young Talent Research Fund of Zhengzhou University (No.~1521315002), the China
Postdoctoral Science Foundation (No. 2017M612410) and Foundation for University Key
Teacher of Henan Province (No. 2016GGJS-007).}}
\author{Ruifang Liu$^{a}$~~Jie Xue$^{a}$\\ ~ \\
{\footnotesize $^a$ School of Mathematics and Statistics, Zhengzhou
University, Zhengzhou, Henan 450001, China}\\}
\date{}
\begin{document}
\maketitle

\begin{abstract}
Let $G$ be a connected graph with vertex set
$V(G)=\{v_{1},v_{2},\ldots,v_{n}\}$. The distance matrix
$D(G)=(d_{ij})_{n\times n}$ is the matrix indexed by the vertices of
$G,$ where $d_{ij}$ denotes the distance between the vertices
$v_{i}$ and $v_{j}$. Suppose that
$\lambda_{1}(D)\geq\lambda_{2}(D)\geq\cdots\geq\lambda_{n}(D)$ are
the distance spectrum of $G$. The graph $G$ is said to be determined
by its $D$-spectrum if with respect to the distance matrix $D(G)$,
any graph having the same spectrum as $G$ is isomorphic to $G$. In
this paper, we give the distance characteristic polynomial of some
graphs with small diameter, and also prove that these graphs are
determined by their $D$-spectra.

\bigskip
\noindent {\bf AMS Classification:} 05C50

\noindent {\bf Key words:} Distance spectrum; Distance
characteristic polynomial; $D$-spectrum determined
\end{abstract}

\section{Introduction}

~~~~All graphs considered here are simple, undirected and connected.
Let $G$ be a graph with vertex set
$V(G)=\{v_{1},v_{2},\ldots,v_{n}\}$ and edge set $E(G)$. Two
vertices $u$ and $v$ are called adjacent if they are connected by an
edge, denoted by $u\sim v$. Let $N_{G}(v)$ denote the neighbor set
of $v$ in $G$. The degree of a vertex $v,$ written by $d_{G}(v)$ or
$d(v)$, is the number of edges incident with $v$. Let $X$ and $Y$ be
subsets of vertices of $G$. The induced subgraph $G[X]$ is the
subgraph of $G$ whose vertex set is $X$ and whose edge set consists
of all edges of $G$ which have both ends in $X$. We denote by
$E[X,Y]$ the set of edges of $G$ with one end in $X$ and the other
end in $Y$, and denote by $e[X,Y]$ their number. The distance
between vertices $u$ and $v$ of a graph $G$ is denoted by
$d_G(u,v)$. The diameter of $G,$ denoted by $\mbox {diam}(G),$ is
the maximum distance between any pair of vertices of $G.$ The
complete product $G_{1}\bigtriangledown G_{2}$ of graphs $G_{1}$ and
$G_{2}$ is the graph obtained from $G_{1}\cup G_{2}$ by joining
every vertex of $G_{1}$ to every vertex of $G_{2}.$ Denote by
$K_{n}$, $C_{n}$, $P_{n}$ and $S_{n}$ the complete graph, the cycle,
the path and the star, respectively, each on $n$ vertices. Let
$K_{n}^{c}$ denote the complement of $K_{n}.$

The distance matrix $D(G)=(d_{ij})_{n\times n}$ of a connected graph
$G$ is the matrix indexed by the vertices of $G,$ where $d_{ij}$
denotes the distance between the vertices $v_{i}$ and $v_{j}$. Let
$\lambda_{1}(D)\geq\lambda_{2}(D)\geq\cdots\geq\lambda_{n}(D)$ be
the spectrum of $D(G)$, that is, the distance spectrum of $G.$ The
polynomial $P_{D}(\lambda)=\mbox{det}(\lambda I-D(G))$ is defined as
the distance characteristic polynomial of a graph $G.$ Two graphs
are said to be $D$-cospectral if they have the same distance
spectrum. A graph $G$ is said to be determined by its $D$-spectrum
if there is no other non-isomorphic graph $D$-cospectral to $G$.

Which graphs are determined by their spectrum seems to be a
difficult and interesting problem in spectral graph theory.
This question was raised by G\"{u}nthard and Primas \cite{GP}.
For surveys of this question see \cite{ER1, ER3}. Up to now, only a few families of graphs were shown to be determined
by their spectra, most of which were restricted to the adjacency,
Laplacian or signless Laplacian spectra. In particular, there are
much fewer results on which graphs are determined by their
$D$-spectra. In \cite{LHQ}, Lin et al. proved that the complete
graph $K_{n},$ the complete bipartite graph $K_{n_{1},n_{2}}$ and
the complete split graph $K_{a}\bigtriangledown K_{b}^{c}$ are
determined by their $D$-spectra, and the authors proposed a conjecture
that the complete $k$-partite graph $K_{n_{1}, n_{2}, \ldots,
n_{k}}$ is determined by its $D$-spectrum. Recently, Jin and Zhang
\cite{XDZ} have confirmed the conjecture. Lin, Zhai and Gong
\cite{LZG} characterized all connected graphs with
$\lambda_{n-1}(D(G))=-1$, and showed that these graphs are
determined by their $D$-spectra. Moreover, in this paper, they also
proved that the graphs with $\lambda_{n-2}(D(G))>-1$ are determined
by their distance spectra. In \cite{LHQ1}, Lin showed that connected
graphs with $\lambda_{n}(D(G))\geq-1-\sqrt{2}$ are determined by
their distance spectra. Cioab\u{a} et al. \cite{CI} affirmed that the famous
friendship graph $F_{n}^{k}~(k\neq16)$ is determined by its adjacency spectrum.
Lu, Huang and Huang \cite{LL} show that all graphs with exactly two distance eigenvalues (counting multiplicity)
different from -1 and -3 are determined by their $D$-spectra, and particularly, $F_{n}^{k}$ is determined by its distance spectrum.

Next, we introduce a class of graphs $K^{n_{1},n_{2},\ldots, n_{k}}_{n}$, as shown in Fig. 1.

\begin{center}
\setlength{\unitlength}{1.0mm}
\begin{picture}(420,10)

\put(80,0){\circle*{1}}
\put(65,13){\bigcircle{10}}\put(62,12){$K_{n_{2}}$}
\put(65,-13){\bigcircle{10}}\put(62,-14){$K_{n_{1}}$}
\put(95,13){\bigcircle{10}}\put(92,12){$K_{n_{3}}$}
\put(95,-13){\bigcircle{10}}\put(92,-14){$K_{n_{k}}$}

\curve(69,16,80,0)\curve(91,16,80,0)\curve(69,-16,80,0)\curve(91,-16,80,0)
\curve(63,8.4,80,0)\curve(97,8.4,80,0)\curve(63,-8.4,80,0)\curve(97,-8.4,80,0)
\put(95,0){\circle*{0.55}}\put(95,3){\circle*{0.55}}\put(95,-3){\circle*{0.55}}
\put(79,-5){$v$}
\end{picture}
\vskip 2.0cm  Fig. $1$. Graph $K^{n_{1},n_{2},\ldots, n_{k}}_{n}$.
\end{center}
$\bullet$ $K^{n_{1},n_{2},\ldots, n_{k}}_{n}$: $=(K_{n_{1}}\cup K_{n_{2}}\cup\cdots \cup K_{n_{k}})\bigtriangledown\{v\},$ where $k\geq2$.

In this paper, we firstly show that three special classes of graphs in $K^{n_{1},n_{2},\ldots, n_{k}}_{n},$ that is, $K^{h}_{n}=K^{h-1,1,\ldots,1}_{n}~(4\leq h \leq n-1),$ $K^{s,t}_{n}~(s\geq4$ and $t\geq4)$ and $K^{n_{1},n_{2},\ldots, n_{k}}_{n}~(1\leq n_{i}\leq 2)$ are
determined by their $D$-spectra. Clearly, the friendship graph $F_{n}^{k}$ belongs to the third class.

Secondly, we prove that $K^{s+t}_{n}~(s\geq2, t\geq2)$ (see Fig. 2) is also determined by its $D$-spectrum.

\begin{center}
\setlength{\unitlength}{1.0mm}
\begin{picture}(105,10)
\put(-4,5){\circle*{1}} \put(-4,5){\line(3,4){6}}
\put(-4,5){\line(3,2){6}} \put(-4,5){\line(3,-4){6}}
\put(-4,5){\line(3,-2){6}} \put(2,13){\circle*{1}}
\put(2,9){\circle*{1}} \put(2,1){\circle*{1}}
\put(2,-3){\circle*{1}} \put(1.5,3.5){\vdots}
\put(-11,5){\circle{14}} \put(-13,4){$K_{h}$}
\put(-8,-12){$K^{h}_{n}$} \put(26,5){\circle*{1}}
\put(33,5){\circle*{1}} \put(26,5){\line(1,0){7}}
\put(19,5){\circle{14}} \put(40,5){\circle{14}} \put(17,4){$K_{s}$}
\put(38,4){$K_{t}$} \put(27,-12){$K^{s+t}_{n}$}
\put(63,5){\circle{14}} \put(77,5){\circle{14}}
\put(70,5){\circle*{1}} \put(61,4){$K_{s}$} \put(75,4){$K_{t}$}
\put(67,-12){$K_{n}^{s,t}$}

\put(110,0){\line(-5,1){15}}\put(110,0){\line(-5,-1){15}}
\put(110,0){\line(5,1){15}}\put(110,0){\line(5,-1){15}}
\put(110,0){\line(-1,5){3}}\put(110,0){\line(1,5){3}}
\put(110,0){\line(-3,2){12}}\put(110,0){\line(-2,3){8}}
\put(110,0){\line(3,2){12}} \put(110,0){\line(2,3){8}}

\put(95,-3){\circle*{1}}\put(95,3){\circle*{1}}\put(95,3){\line(0,-1){6}}
\put(107,15){\circle*{1}}\put(113,15){\circle*{1}}\put(107,15){\line(1,0){6}}
\put(98,8){\circle*{1}}\put(102,12){\circle*{1}}\put(98,8){\line(1,1){4}}

\put(122,8){\circle*{1}}\put(118,12){\circle*{1}}\put(122,8){\line(-1,1){4}}

\put(125,-3){\circle*{1}}\put(125,3){\circle*{1}}\put(125,-3){\line(0,1){6}}
\put(107,-4){\circle*{0.55}}\put(110,-4){\circle*{0.55}}\put(113,-4){\circle*{0.55}}
\put(108,-12){$F_{n}^{k}$}

\end{picture}
\vskip 1.5cm  Fig. $2$. Graphs $K^{h}_{n}$, $K^{s+t}_{n}$,
$K_{n}^{s,t}$ and $F_{n}^{k}$.
\end{center}
$\bullet$ $K^{s+t}_{n}$: the graph obtained by adding one edge joining a vertex of $K_{s}$ to a vertex of $K_{t}$.

\section{Preliminaries}

~~~~In this section, we give some useful lemmas and results. The
following lemma is well-known Cauchy interlacing theorem.

\begin{lemma}{\bf (\cite{DMC})} \label{le1}
Let $A$ be a Hermitian matrix of order $n$ with eigenvalues
$\lambda_{1}(A)\geq\lambda_{2}(A)\geq\cdots \geq\lambda_{n}(A),$ and
$B$ be a principal submatrix of $A$ of order $m$ with eigenvalues
$\mu_{1}(B)\geq \mu_{2}(B)\geq\cdots \geq\mu_{m}(B)$. Then
$\lambda_{n-m+i}(A)\leq\mu_{i}(B)\leq\lambda_{i}(A)$ for
$i=1,2,\ldots,m.$
\end{lemma}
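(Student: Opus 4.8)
The plan is to derive both inequalities from the Courant--Fischer min--max characterization of the eigenvalues of a Hermitian matrix, applied to $A$ and to $B$ simultaneously. First I would pass to a convenient normal form: since permuting the index set of $A$ conjugates $A$ by a permutation matrix and hence preserves its spectrum, and since $B$ is obtained by deleting $n-m$ rows together with the corresponding columns, we may assume without loss of generality that $B$ is the leading $m\times m$ principal block of $A$. Identify $\mathbb{C}^m$ with the coordinate subspace $W=\{x\in\mathbb{C}^n:x_{m+1}=\dots=x_n=0\}$; for $x\in W$ with truncation $\tilde x\in\mathbb{C}^m$ one has $x^{*}Ax=\tilde x^{*}B\tilde x$ and $x^{*}x=\tilde x^{*}\tilde x$, so the Rayleigh quotient of $A$ restricted to $W$ is exactly the Rayleigh quotient of $B$, and the embedding $\mathbb{C}^m\hookrightarrow W$ preserves dimensions of subspaces.

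For the upper bound $\mu_i(B)\le\lambda_i(A)$ I would use the max--min form $\lambda_i(A)=\max_{\dim V=i}\min_{0\neq x\in V}\frac{x^{*}Ax}{x^{*}x}$. Choose an $i$-dimensional subspace $V_0\subseteq\mathbb{C}^m$ attaining $\mu_i(B)=\min_{0\neq y\in V_0}\frac{y^{*}By}{y^{*}y}$; viewing $V_0$ inside $W\subseteq\mathbb{C}^n$ produces an $i$-dimensional subspace of $\mathbb{C}^n$ on which the Rayleigh quotient of $A$ is everywhere at least $\mu_i(B)$, and the max--min formula then forces $\lambda_i(A)\ge\mu_i(B)$. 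For the lower bound $\lambda_{n-m+i}(A)\le\mu_i(B)$ I would instead use the dual min--max form $\lambda_j(A)=\min_{\dim U=n-j+1}\max_{0\neq x\in U}\frac{x^{*}Ax}{x^{*}x}$ with $j=n-m+i$, so that $n-j+1=m-i+1$. Picking an $(m-i+1)$-dimensional subspace $U_0\subseteq\mathbb{C}^m$ with $\mu_i(B)=\max_{0\neq y\in U_0}\frac{y^{*}By}{y^{*}y}$ and embedding it in $W$ gives an $(m-i+1)$-dimensional subspace of $\mathbb{C}^n$ on which the Rayleigh quotient of $A$ is everywhere at most $\mu_i(B)$; the min--max formula then yields $\lambda_{n-m+i}(A)\le\mu_i(B)$. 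Letting $i$ run over $1,\dots,m$ gives the full chain of interlacing inequalities.

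The argument has no genuine obstacle; the only points requiring care are the index bookkeeping in the two Courant--Fischer formulas (matching the test-subspace dimension $m-i+1$ to the eigenvalue index $n-m+i$), and the elementary but essential observation that restricting to the coordinate subspace $W$ changes neither the Rayleigh quotient nor the dimension of a subspace. An alternative route I would consider, if I preferred to avoid invoking the dual min--max form, is to first establish the one-deletion case $m=n-1$ — proving the classical two-line interlacing $\lambda_{i+1}(A)\le\mu_i(B)\le\lambda_i(A)$ directly — and then iterate this $n-m$ times, telescoping the resulting inequalities to obtain $\lambda_{n-m+i}(A)\le\mu_i(B)\le\lambda_i(A)$ by induction on $n-m$.
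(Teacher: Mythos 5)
Your argument is correct, and there is nothing in the paper to compare it against: Lemma~\ref{le1} is the classical Cauchy interlacing theorem, which the authors simply cite from \cite{DMC} without giving a proof. Your Courant--Fischer derivation is the standard one and the details check out: the reduction to the leading principal block by a permutation similarity is legitimate, the identification of the Rayleigh quotient of $A$ on the coordinate subspace $W$ with that of $B$ is the key (and correctly stated) observation, the max--min form with an $i$-dimensional subspace spanned by eigenvectors of $B$ for $\mu_1,\dots,\mu_i$ gives $\mu_i(B)\le\lambda_i(A)$, and the dual min--max form with the $(m-i+1)$-dimensional span of eigenvectors for $\mu_i,\dots,\mu_m$ gives $\lambda_{n-m+i}(A)\le\mu_i(B)$, the index bookkeeping $n-j+1=m-i+1$ for $j=n-m+i$ being exactly right. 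The alternative route you mention (proving the one-row-deletion case and iterating, with the telescoping $\lambda_{i+(n-m)}(A)\le\mu_i(B)\le\lambda_i(A)$) is also a standard and valid way to obtain the same statement.
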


Applying Lemma \ref{le1} to the distance matrix $D$ of a graph, we
have

\begin{lemma}\label{le15}
Let $G$ be a graph of order $n$ with distance spectrum
$\lambda_{1}(G)\geq\lambda_{2}(G)\geq\cdots \geq\lambda_{n}(G),$ and
$H$ be an induced subgraph of $G$ on $m$ vertices with the distance
spectrum $\mu_{1}(H)\geq \mu_{2}(H)\geq\cdots \geq\mu_{m}(H).$
Moreover, if $D(H)$ is a principal submatrix of $D(G),$ then
$\lambda_{n-m+i}(G)\leq\mu_{i}(H)\leq\lambda_{i}(G)$ for
$i=1,2,\ldots,m.$
\end{lemma}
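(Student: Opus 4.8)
The statement to prove is Lemma~\ref{le15}: if $G$ has distance spectrum $\lambda_1(G)\ge\cdots\ge\lambda_n(G)$, $H$ is an induced subgraph on $m$ vertices, and $D(H)$ is a principal submatrix of $D(G)$, then $\lambda_{n-m+i}(G)\le\mu_i(H)\le\lambda_i(G)$ for all $i$. The plan is to apply Lemma~\ref{le1} (the Cauchy Interlace Theorem) directly, with $A=D(G)$ and $B=D(H)$. First I would note that $D(G)$ is a real symmetric matrix, hence Hermitian, so Lemma~\ref{le1} applies to it. Next I would observe that $D(H)$, being a principal submatrix of $D(G)$ by hypothesis, is a principal submatrix in exactly the sense required by Lemma~\ref{le1}; the eigenvalues of $D(H)$ are precisely $\mu_1(H)\ge\cdots\ge\mu_m(H)$, the distance spectrum of $H$. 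Then the conclusion $\lambda_{n-m+i}(G)\le\mu_i(H)\le\lambda_i(G)$ is an immediate instantiation of the inequality $\lambda_{n-m+i}(A)\le\mu_i(B)\le\lambda_i(A)$ from Lemma~\ref{le1}.

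The one genuine point worth spelling out is the distinction between ``$H$ is an induced subgraph of $G$'' and ``$D(H)$ is a principal submatrix of $D(G)$'': these are not automatically equivalent, since distances measured inside $H$ can exceed distances measured in $G$ (indeed $d_H(u,v)\ge d_G(u,v)$ always, with equality not guaranteed). The hypothesis explicitly assumes the principal-submatrix condition, which is exactly the case $d_H(u,v)=d_G(u,v)$ for all $u,v\in V(H)$; this happens, for instance, when $H$ is a clique of $G$, or more generally when $H$ is distance-preserving (isometric) in $G$. So the proof should remark that the stated hypothesis is precisely what is needed to identify $D(H)$ with the relevant principal submatrix of $D(G)$, and then the result follows verbatim from Lemma~\ref{le1}.

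There is essentially no obstacle here: the lemma is a routine specialization of the Cauchy interlacing theorem, included because it will be invoked repeatedly in the sequel to bound the extreme distance eigenvalues of the candidate cospectral mates (typically by exhibiting a small induced clique or isometric subgraph whose distance eigenvalues are easy to compute). The ``hard part,'' such as it is, is merely the bookkeeping of writing the index shift $n-m+i$ correctly and recording the hypothesis under which the identification of submatrices is legitimate; after that, the two displayed inequalities are copied directly from Lemma~\ref{le1}.
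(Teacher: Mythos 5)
Your proof is correct and matches the paper's approach exactly: the paper derives Lemma~\ref{le15} as an immediate consequence of Lemma~\ref{le1} applied to $A=D(G)$ and $B=D(H)$, which is precisely your argument. Your additional remark distinguishing ``induced subgraph'' from ``$D(H)$ is a principal submatrix of $D(G)$'' is a sensible clarification of why that hypothesis is stated explicitly, but it introduces nothing beyond the paper's one-line justification.
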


\begin{lemma}{\bf (\cite{LHQ})} \label{le2}
Let $G$ be a connected graph and $D$ be the distance matrix of G.
Then $\lambda_{n}(D)=-2$ with multiplicity $n-k$ if and only if $G$
is a complete $k$-partite graph for $2\leq k\leq n-1$.
\end{lemma}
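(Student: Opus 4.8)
The plan is to prove both implications; the crux in each direction is to pin down the $(-2)$-eigenspace of $D(G)$ in the presence of diameter at most $2$. Note first that if $\mathrm{diam}(G)\le 2$, then $d_{ij}\in\{1,2\}$ for $i\ne j$, so $D(G)=2(J-I)-A$, where $J$ is the all-ones matrix, $I$ the identity and $A$ the adjacency matrix of $G$; equivalently $D(G)+2I=2J-A=J+(\overline{A}+I)$, where $\overline{A}$ is the adjacency matrix of the complement $\overline{G}$.

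First, the ``if'' direction. Suppose $G=K_{n_1,\ldots,n_k}$ with $2\le k\le n-1$. Then $\mathrm{diam}(G)\le 2$, and $\overline{G}$ is the disjoint union of the cliques $K_{n_1},\ldots,K_{n_k}$, so $\overline{A}+I$ is block-diagonal with blocks $J_{n_1},\ldots,J_{n_k}$. Hence $D+2I=J+(\overline{A}+I)$ is a sum of two positive semidefinite matrices, so every eigenvalue of $D$ is at least $-2$. Let $\mathbf{1}_1,\ldots,\mathbf{1}_k$ be the indicator vectors of the $k$ parts. The column space of $D+2I$ is contained in their $k$-dimensional span, while for a nonzero $v=\sum_i c_i\mathbf{1}_i$ one computes $v^{\top}(D+2I)v=\big(\sum_i c_i n_i\big)^2+\sum_i c_i^2 n_i^2>0$; thus $D+2I$ is positive definite on that span and therefore has rank exactly $k$. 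Consequently $-2$ is an eigenvalue of $D$ of multiplicity exactly $n-k\ge 1$, and being the minimum it equals $\lambda_n(D)$.

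Now, the ``only if'' direction. Assume $\lambda_n(D)=-2$ of multiplicity $n-k$; then $-2$ is attained, so $n-k\ge 1$ and $D+2I\succeq 0$. Step one: $\mathrm{diam}(G)\le 2$. If not, choose $u,v$ with $d(u,v)=3$ and a neighbour $w$ of $u$ on a $u$--$v$ geodesic; the principal submatrix $M_1$ of $D$ on $\{u,w,v\}$ equals $\left(\begin{smallmatrix}0&1&3\\1&0&2\\3&2&0\end{smallmatrix}\right)$, and since $\det(M_1+2I)=-8<0$ the matrix $M_1$ has an eigenvalue smaller than $-2$, so by Lemma \ref{le1} we get $\lambda_n(D)<-2$, a contradiction. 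Step two: $\overline{G}$ has no induced $P_3$. If $x,y,z$ induced a $P_3$ in $\overline{G}$, i.e.\ $xy,yz\notin E(G)$ and $xz\in E(G)$, then since $\mathrm{diam}(G)\le 2$ the principal submatrix $M_2$ of $D$ on $\{x,y,z\}$ equals $\left(\begin{smallmatrix}0&2&1\\2&0&2\\1&2&0\end{smallmatrix}\right)$, and $\det(M_2+2I)=-2<0$, so $M_2$ again has an eigenvalue below $-2$, contradicting Lemma \ref{le1}. Hence $\overline{G}$ is a disjoint union of cliques, i.e.\ $G$ is complete $m$-partite for some $m\ge 2$ (here $m\ge 2$ because $G$ is connected with at least two vertices); applying the ``if'' direction just proved, $-2$ has multiplicity $n-m$, whence $m=k$.

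I expect the main obstacle to be the exactness in the ``if'' direction: obtaining multiplicity exactly $n-k$, rather than merely at least $n-k$, requires showing that the restriction of $D+2I$ to the span of the part-indicators is positive definite, equivalently that the $k\times k$ quotient matrix of $D$ with respect to the partition into parts has all its eigenvalues strictly above $-2$; the quadratic-form identity above is the clean way to do this. The remaining ingredients — the two $3\times 3$ determinant evaluations in the ``only if'' direction, and the boundary check that the complete graph $K_n=K_{1,\ldots,1}$ has $\lambda_n(D)=-1\ne -2$ (consistent with the exclusion $k\le n-1$) — are routine.
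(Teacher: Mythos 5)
The paper does not prove this lemma at all: it is quoted from \cite{LHQ} as a known result, so there is no internal proof to compare against. Judged on its own, your argument is correct and complete. The decomposition $D+2I=2J-A=J+(\overline{A}+I)$ for diameter-$2$ graphs, the rank count (column space of $D+2I$ inside the span of the part indicators, plus positive definiteness of the quadratic form $(\sum_i c_in_i)^2+\sum_i c_i^2n_i^2$ on that span) gives multiplicity exactly $n-k$ in the ``if'' direction; and in the ``only if'' direction the two $3\times 3$ principal submatrices with $\det(M_1+2I)=-8$ and $\det(M_2+2I)=-2$, combined with interlacing (Lemma \ref{le1}), correctly force $\mathrm{diam}(G)\le 2$ and $\overline{G}$ to be $P_3$-free, hence $G$ complete multipartite, after which the ``if'' direction pins down the number of parts. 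The one point you relegate to a closing remark but which genuinely belongs in the proof is ruling out $m=n$ (i.e.\ $G=K_n$, where $\lambda_n(D)=-1$); this is immediate either from that direct computation or from the fact that the exact rank count gives multiplicity $n-m\ge 1$, so it is only a presentational, not a mathematical, gap. This is essentially the standard route (and close in spirit to the proof in \cite{LHQ}, which also exploits $D=2(J-I)-A$ for diameter-$2$ graphs), carried out correctly.
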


\begin{lemma}{\bf (\cite{XJ})}\label{le18}
Let $G$ be a graph with order $n$ and $d(G)=2$. If $G^{'}$ has the same distance spectrum as $G$, then\par
\noindent$\bullet |E(G)|=|E(G^{'})|$ when $d(G^{'})=2;$\par
\noindent$\bullet |E(G)|<|E(G^{'})|$ when $d(G^{'})\geq3$.
\end{lemma}

\begin{theorem}\label{th1}
Let $4\leq h\leq n-1.$ The distance characteristic polynomial of
$K_{n}^{h}$ is
$$P_{D}(\lambda)=(\lambda+1)^{h-2}(\lambda+2)^{n-h-1}[\lambda^{3}+(h+4-2n)\lambda^{2}+(5-2h-2nh+2h^{2}-n)\lambda-nh+h^{2}-2h+2].$$
Let $\lambda_{1}\geq \lambda_{2}\geq \cdots\geq\lambda_{n}$ be the
distance spectrum of $K_{n}^{h}$. Then \\
$\bullet$ $\lambda_{1}>0$, $-1<\lambda_{2}<-\frac{1}{2}$ and $\lambda_{3}=-1.$\\
$\bullet$ $\lambda_{n-1}\in\{-1,-2\}$ and $\lambda_{n}<-2.$
\end{theorem}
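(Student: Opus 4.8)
The plan is to compute the distance characteristic polynomial by exploiting the structure of $K_n^h$ as a graph with a large "twin" structure, and then to locate the eigenvalues using interlacing together with a direct analysis of the cubic factor.

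First I would set up the distance matrix explicitly. Label the vertices of $K_h$ as $u_0, u_1, \dots, u_{h-1}$, where $u_0$ is the vertex to which the $n-h$ pendant edges are attached, and label the pendant vertices $w_1, \dots, w_{n-h}$. Then the pairwise distances are: $d(u_i,u_j)=1$ for $1\le i,j\le h-1$, $i\ne j$; $d(u_0,u_i)=1$; $d(u_0,w_k)=1$; $d(u_i,w_k)=2$ for $i\ge 1$; and $d(w_k,w_l)=2$ for $k\ne l$. The key observation is that the $h-1$ vertices $u_1,\dots,u_{h-1}$ are mutually interchangeable (they form a set of twins), and so are the $n-h$ pendant vertices $w_1,\dots,w_{n-h}$. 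This gives an equitable partition with three cells $\{u_0\}$, $\{u_1,\dots,u_{h-1}\}$, $\{w_1,\dots,w_{n-h}\}$. Vectors supported within one twin-class and summing to zero are eigenvectors: for the $u_i$-class they give eigenvalue $-1$ with multiplicity $h-2$ (since within that class the distance submatrix is $J-I$), and for the $w_k$-class they give eigenvalue $-2$ with multiplicity $n-h-1$ (since within that class it is $2(J-I)$). This accounts for the $(\lambda+1)^{h-2}(\lambda+2)^{n-h-1}$ factor. The remaining three eigenvalues are those of the $3\times 3$ quotient matrix
\[
B=\begin{pmatrix} 0 & h-1 & n-h \\ 1 & h-2 & 2(n-h) \\ 1 & 2(h-1) & 2(n-h-1)\end{pmatrix},
\]
whose characteristic polynomial I would expand and check equals $\lambda^{3}+(h+4-2n)\lambda^{2}+(5-2h-2nh+2h^{2}-n)\lambda-nh+h^{2}-2h+2$; this is a routine determinant computation.

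Next I would pin down the locations of the eigenvalues. That $\lambda_1>0$ is immediate since $D$ has zero trace and is nonzero (or: $D$ is a nonnegative irreducible matrix, so its Perron eigenvalue is positive). For $\lambda_3$: I claim $\lambda_3=-1$, i.e. $-1$ is a root of the cubic factor; I would verify by plugging $\lambda=-1$ into the cubic and checking it vanishes — actually one should check this carefully, since if $-1$ is a root of the cubic then its total multiplicity as a $D$-eigenvalue is $h-1$, and I must confirm $\lambda_3=-1$ rather than $\lambda_2=-1$. The cleaner route is: evaluate the cubic $q(\lambda)$ at $\lambda=-1$, at $\lambda=-\tfrac12$, and at $\lambda=-2$, determine the signs, and combine with the sign of the leading coefficient and with $q(\lambda)\to+\infty$ as $\lambda\to+\infty$ to separate the three roots of $q$ into the intervals $(0,\infty)$, $(-1,-\tfrac12)$, and $(-\infty,-2)$ (the last using that $q(-2)$ has a definite sign for $h\ge 3$, $n\ge h+1$). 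Combining the three roots of $q$ with the multiplicities $h-2$ of $-1$ and $n-h-1$ of $-2$ from the other factors, and ordering, yields $\lambda_2\in(-1,-\tfrac12)$, $\lambda_3=-1$ (the largest root of $q$ that is $\le -1$ coincides with the $-1$'s coming from the twin eigenvectors, so $\lambda_3=-1$), $\lambda_{n-1}\in\{-1,-2\}$ (it is $-2$ when $n-h-1\ge 1$, and $-1$ when $n=h+1$ so there are no $-2$'s except the single cubic root which is $<-2$), and $\lambda_n<-2$ being the smallest root of $q$.

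The main obstacle I anticipate is not the polynomial identity but the careful sign bookkeeping for the cubic $q(\lambda)$ at the test points $-1$, $-1/2$, $-2$: one must check that the inequalities $q(-1)$, $q(-1/2)$, $q(-2)$ have the claimed signs for all admissible $(n,h)$ with $h\ge 3$ and $n\ge h+1$, and handle the boundary case $n=h+1$ (where the $(\lambda+2)^{n-h-1}$ factor disappears) separately so that the statement "$\lambda_{n-1}\in\{-1,-2\}$" is correctly interpreted. I would also double-check the strict inequality $\lambda_2<-\tfrac12$ (as opposed to $\le$) by verifying $q(-\tfrac12)\ne 0$, and $\lambda_n<-2$ strictly by verifying $q(-2)\ne 0$, in each admissible case. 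Once the signs are settled, the ordering of all $n$ eigenvalues and hence both bullet points follow mechanically.
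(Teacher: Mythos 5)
Your proposal is correct, but it reaches the characteristic polynomial by a different route than the paper. The paper computes $\det(\lambda I-D)$ directly by elementary row and column operations, peeling off the factors $(\lambda+1)^{h-2}$ and $(\lambda+2)^{n-h-1}$ and reducing to a $3\times 3$ determinant; you instead exploit the two twin classes (the $h-1$ clique vertices other than the attachment vertex, and the $n-h$ pendants), obtaining the eigenvalues $-1$ and $-2$ with multiplicities $h-2$ and $n-h-1$ from difference vectors within a class, and the remaining three eigenvalues from the $3\times 3$ quotient matrix of the equitable partition $\{u_0\}\cup U\cup W$. Your quotient matrix $B$ is correct and its characteristic polynomial does expand to the paper's cubic (indeed the paper's reduced $3\times 3$ determinant is just $\det(\lambda I-B)$ up to a reordering of the cells), so the two computations are equivalent in substance; your version is more conceptual and transfers immediately to other twin-rich graphs, while the paper's is a self-contained determinant manipulation. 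The root-location step is essentially identical to the paper's: sign evaluation of the cubic at $-2,-1,-\tfrac12$ (the paper gets $f(-1)=(n-h)(h-1)>0$, $f(-\tfrac12)=\tfrac38-\tfrac34 h<0$, $f(-2)=(n-h)(3h-6)>0$, $f(0)<0$) together with the behaviour at $\pm\infty$. You additionally make explicit the ordering bookkeeping and the boundary case $n=h+1$, which the paper leaves implicit. One small caution: your parenthetical justification of $\lambda_3=-1$ (``the largest root of $q$ that is $\le -1$ coincides with the $-1$'s from the twin eigenvectors'') is misstated --- $-1$ is not a root of the cubic, since $q(-1)>0$; the correct reason, which your ``combine the roots with the multiplicities and order'' plan already supplies, is that exactly two eigenvalues exceed $-1$ (the two largest roots of $q$) while $-1$ occurs with multiplicity $h-2\ge 1$, forcing $\lambda_3=-1$.
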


\begin{proof}It is clear that the diameter of $K_{n}^{h}$ is 2, and the distance matrix of $K_{n}^{h}$ is
\begin{equation*}\begin{split}
D&=\left(\begin{array}{ccccccc}
0&\cdots&1&1&2&\cdots&2\\
\vdots&\ddots&\vdots&\vdots&\vdots&\vdots&\vdots\\
1&\cdots&0&1&2&\cdots&2\\
1&\cdots&1&0&1&\cdots&1\\
2&\cdots&2&1&0&\cdots&2\\
\vdots&\vdots&\vdots&\vdots&\vdots&\ddots&\vdots\\
2&\cdots&2&1&2&\cdots&0\\
\end{array}\right).\\
\end{split}\end{equation*}
Then
\begin{equation*}\begin{split}
&\mbox{det}(\lambda I-D)=\left|\begin{array}{ccccccc}
\lambda&\cdots&-1&-1&-2&\cdots&-2\\
\vdots&\ddots&\vdots&\vdots&\vdots&\vdots&\vdots\\
-1&\cdots&\lambda&-1&-2&\cdots&-2\\
-1&\cdots&-1&\lambda&-1&\cdots&-1\\
-2&\cdots&-2&-1&\lambda&\cdots&-2\\
\vdots&\vdots&\vdots&\vdots&\vdots&\ddots&\vdots\\
-2&\cdots&-2&-1&-2&\cdots&\lambda\\
\end{array}\right|\\
&=\left|\begin{array}{cccccccccc}
\lambda-(h-2)&-1&\cdots&-1&-1&-2-2(n-h-1)&-2&\cdots&-2\\
0&\lambda+1&\cdots&0&0&0&0&\cdots&0\\
\vdots&\vdots&\ddots&\vdots&\vdots&\vdots&\vdots&\vdots&\vdots\\
0&0&\cdots&\lambda+1&0&0&0&\cdots&0\\
-1-(h-2)&-1&\cdots&-1&\lambda&-1-(n-h-1)&-1&\cdots&-1\\
-2-2(h-2)&-2&\cdots&-2&-1&\lambda-2(n-h-1)&-2&\cdots&-2\\
0&0&\cdots&0&0&0&\lambda+2&\cdots&0\\
\vdots&\vdots&\vdots&\vdots&\vdots&\vdots&\vdots&\ddots&\vdots\\
0&0&\cdots&0&0&0&0&\cdots&\lambda+2\\
\end{array}\right|\\
\end{split}\end{equation*}
\begin{equation*}\begin{split}
&=(\lambda+1)^{h-2}(\lambda+2)^{n-h-1}\left|\begin{array}{cccccccccc}
\lambda-(h-2)&-1&-2-2(n-h-1)\\
-1-(h-2)&\lambda&-1-(n-h-1)\\
-2-2(h-2)&-1&\lambda-2(n-h-1)\\
\end{array}\right|\\
&=(\lambda+1)^{h-2}(\lambda+2)^{n-h-1}[\lambda^{3}+(h+4-2n)\lambda^{2}+(5-2h-2nh+2h^{2}-n)\lambda-nh+h^{2}-2h+2].
\end{split}\end{equation*}

In the following, we will prove the remaining part of Theorem
\ref{th1}. Consider the cubic function on $x$
$$f(x)=x^{3}+(h+4-2n)x^{2}+(5-2h-2nh+2h^{2}-n)x-nh+h^{2}-2h+2.$$ From a
simple calculation, we have
\begin{equation*}\begin{split}
\begin{cases}
f(0)=-nh+h^{2}-2h+2=-h(n-h)-(2h-2)<0,\\
f(-\frac{1}{2})=\frac{3}{8}-\frac{3}{4}h<0,\\
f(-1)=h-n+nh-h^{2}=(n-h)(h-1)>0,\\
f(-2)=6h-6n+3nh-3h^{2}=(n-h)(3h-6)>0.
\end{cases}
\end{split}\end{equation*}
Note that $f(x)\rightarrow +\infty ~(x\rightarrow +\infty)$ and
$f(0)<0$, so there is at least one root in $(0,+\infty)$. Since
$f(-\frac{1}{2})<0$ and $f(-1)>0$, then there is at least one root
in $(-1,-\frac{1}{2})$. By $f(x)\rightarrow -\infty ~(x\rightarrow
-\infty)$ and $f(-2)>0$, so there is at least one root in
$(-\infty,-2)$. Thus there is exactly one root in each interval.
\qed
\end{proof}

Using the similar method to compute the distance characteristic
polynomials of $K^{s+t}_{n}$ and $K^{s,t}_{n},$ we have the
following two results.

\begin{theorem}\label{th2}
Let $s\geq2, t\geq2$ and $n=s+t.$ Then the distance characteristic
polynomial of $K^{s+t}_{n}$ is
$$P_{D}(\lambda)=(\lambda+1)^{n-4}[\lambda^{4}+(-s-t+4)\lambda^{3}+(2t+2s-8st+4)\lambda^{2}+(6s+6t-14st)\lambda-5st+2s+2t].$$
Let $\lambda_{1}\geq \lambda_{2}\geq \cdots\geq\lambda_{n}$ denote
the distance spectrum of $K^{s+t}_{n}.$ Then \\
$\bullet$ $\lambda_{1}>0$, $-1<\lambda_{2}<-\frac{1}{2}$ and $\lambda_{3}=-1.$\\
$\bullet$ $-2<\lambda_{n-1}<-1$ and $\lambda_{n}<-2.$
\end{theorem}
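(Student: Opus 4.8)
The plan is to mimic the proof of Theorem~\ref{th1}. First I would partition $V(K^{s+t}_n)$ into four classes: the endpoint $u$ of the bridging edge that lies in $K_s$, the endpoint $w$ that lies in $K_t$, the remaining $s-1$ vertices of $K_s$, and the remaining $t-1$ vertices of $K_t$. Reading distances off the picture ($\mathrm{diam}=3$) gives the block form of $D=D(K^{s+t}_n)$: inside each of the two large classes the block is $J-I$; between $u$ and the rest of $K_s$, and between $w$ and the rest of $K_t$, every entry is $1$; the $(u,w)$ entry is $1$; between $u$ and the rest of $K_t$, and symmetrically between $w$ and the rest of $K_s$, every entry is $2$; and every entry between the two large classes is $3$. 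In $\lambda I-D$ the rows indexed by the $s-1$ ordinary vertices of $K_s$ agree off the diagonal, so subtracting one of them from the others creates $s-2$ factors $\lambda+1$; doing the same on the $K_t$ side gives $t-2$ more such factors, and after deleting the now-dependent rows and columns a single $4\times4$ determinant remains, with entries affine in $s$ and $t$ exactly as in Theorem~\ref{th1}. Expanding it yields
$$P_D(\lambda)=(\lambda+1)^{n-4}\bigl[\lambda^{4}+(-s-t+4)\lambda^{3}+(2s+2t-8st+4)\lambda^{2}+(6s+6t-14st)\lambda-5st+2s+2t\bigr].$$
(The bracketed quartic is just the characteristic polynomial of the $4\times4$ matrix produced by this reduction, while the additional $n-4$ eigenvalues all equal $-1$ because each of the two large diagonal blocks of $D$ is $J-I$.)

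For the eigenvalue localization I would set $g(x)$ to be the bracketed quartic and evaluate it at $x=0,-\tfrac12,-1,-2$. A short computation gives
$$g(0)=2s+2t-5st,\qquad g(-\tfrac12)=\tfrac{9}{16}-\tfrac38(s+t),\qquad g(-1)=(s-1)(t-1),\qquad g(-2)=3(2s+2t-3st).$$
For $s,t\ge2$ each of these is strictly signed: $g(0)<0$ (since $5st>2s+2t$), $g(-\tfrac12)<0$, $g(-1)>0$, and $g(-2)<0$ (since $3st>2s+2t$); each reduces to an elementary inequality. Since $g$ is monic of degree $4$, $g(x)\to+\infty$ as $x\to\pm\infty$; combining this with the four sign values forces at least one root of $g$ in each of the pairwise disjoint intervals $(-\infty,-2)$, $(-2,-1)$, $(-1,-\tfrac12)$ and $(0,\infty)$. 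As $g$ has only four roots, it has exactly one (simple) root in each of these intervals and none elsewhere; in particular $-1$ is not a root of $g$.

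Finally I would assemble the spectrum. Since $g(-1)\neq0$, the eigenvalue $-1$ of $D$ has multiplicity exactly $n-4$, and the remaining four eigenvalues are the roots of $g$ just located. Ordering everything (the root in $(-1,-\tfrac12)$ exceeds $-1$, which exceeds the root in $(-2,-1)$, which exceeds $-2$) gives $\lambda_1\in(0,\infty)$, $\lambda_2\in(-1,-\tfrac12)$, $\lambda_3=\cdots=\lambda_{n-2}=-1$, $\lambda_{n-1}\in(-2,-1)$ and $\lambda_n<-2$, which is the claim; here $n=s+t\ge5$, the remaining case $s=t=2$ being the path $P_4$. I expect the only genuine labour to be the two routine-but-bulky steps: expanding the $4\times4$ determinant into the quartic in exactly the stated normalized form, and verifying the four sign evaluations of $g$; neither step presents any conceptual difficulty.
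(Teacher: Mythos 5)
Your proposal is correct and follows essentially the same route as the paper: the same row/column reduction of $\lambda I-D$ producing the factor $(\lambda+1)^{n-4}$ times a $4\times4$ determinant, and the same sign evaluation of the resulting quartic at $0$, $-\tfrac12$, $-1$, $-2$ (the paper packages the inequalities via $st+1>s+t$, you state them directly) followed by the count-of-roots argument. Your extra remarks---that $g(-1)\neq0$ pins the multiplicity of $-1$ at exactly $n-4$, and the caveat that the ordering $\lambda_3=-1$ needs $n\geq5$ (the case $s=t=2$ being $P_4$)---are slightly more careful than the paper's own write-up but do not change the argument.
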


\begin{proof}
The distance matrix of $K^{s+t}_{n}$ is
\begin{equation*}\begin{split}
D&=\left(\begin{array}{cccccccc}
0&\cdots&1&1&2&3&\cdots&3\\
\vdots&\ddots&\vdots&\vdots&\vdots&\vdots&\vdots&\vdots\\
1&\cdots&0&1&2&3&\cdots&3\\
1&\cdots&1&0&1&2&\cdots&2\\
2&\cdots&2&1&0&1&\cdots&1\\
3&\cdots&3&2&1&0&\cdots&1\\
\vdots&\vdots&\vdots&\vdots&\vdots&\vdots&\ddots&\vdots\\
3&\cdots&3&2&1&1&\cdots&0\\
\end{array}\right).\\
\end{split}\end{equation*}
Similar to the proof of Theorem \ref{th1}, by a simple calculation,
we have
\begin{equation*}\begin{split}
&\mbox{det}(\lambda
I-D)=(\lambda+1)^{n-4}\left|\begin{array}{cccccccccc}
\lambda-(s-2)&-1&-2&-3-3(t-2)\\
-1-(s-2)&\lambda&-1&-2-2(t-2)\\
-2-2(s-2)&-1&\lambda&-1-(t-2)\\
-3-3(s-2)&-2&-1&\lambda-(t-2)\\
\end{array}\right|\\
&=(\lambda+1)^{n-4}[\lambda^{4}+(-s-t+4)\lambda^{3}+(2t+2s-8st+4)\lambda^{2}+(6s+6t-14st)\lambda-5st+2s+2t].
\end{split}\end{equation*}
Consider the quartic function on $x$
$$f(x)=x^{4}+(-s-t+4)x^{3}+(2t+2s-8st+4)x^{2}+(6s+6t-14st)x-5st+2s+2t.$$
Note that $(s-1)(t-1)=st-s-t+1>0$, hence $st+1>s+t$. Then we obtain
that
\begin{equation*}\begin{split}
\begin{cases}
f(0)=-5st+2s+2t<2(st+1)-5st=2-3st<0,\\
f(-\frac{1}{2})=\frac{9}{16}-\frac{3}{8}s-\frac{3}{8}t<0,\\
f(-1)=1-s-t+st>0,\\
f(-2)=6s+6t-9st<6(st+1)-9st=6-3st<0.
\end{cases}
\end{split}\end{equation*}
Note that $f(x)\rightarrow +\infty~(x\rightarrow +\infty)$ and
$f(0)<0$, so there is at least one root in $(0, +\infty)$. Since
$f(-\frac{1}{2})<0$ and $f(-1)>0$, then there is at least one root
in $(-1,-\frac{1}{2})$. Since $f(-1)>0$ and $f(-2)<0$, then there is
at least one root in $(-2,-1)$. By $f(x)\rightarrow
+\infty~(x\rightarrow -\infty)$ and $f(-2)<0$, so there is at least
one root in $(-\infty,-2)$. Thus there is exactly one root in each
interval. The result is completed. \qed
\end{proof}

\begin{theorem}\label{th3}
Let $s\geq4, t\geq4$ and $n=s+t-1.$ Then the distance characteristic
polynomial of $K^{s,t}_{n}$ is
$$P_{D}(\lambda)=(\lambda+1)^{n-3}[\lambda^{3}+(-s-t+4)\lambda^{2}+(2+s+t-3st)\lambda+s+t-2st].$$
Let $\lambda_{1}\geq \lambda_{2}\geq \cdots\geq\lambda_{n}$ denote
the distance spectrum of $K^{s,t}_{n}.$ Then \\
$\bullet$ $\lambda_{1}>0$, $-1<\lambda_{2}<-\frac{2}{3}$ and
$\lambda_{3}=-1.$ \\
$\bullet$ $\lambda_{n-1}=-1$ and $\lambda_{n}<-2.$
\end{theorem}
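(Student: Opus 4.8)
The plan is to follow the same template used in the proofs of Theorems~\ref{th1} and \ref{th2}. First I would write down the distance matrix of $K^{s,t}_{n}$ explicitly: since $K^{s,t}_{n}$ is obtained by identifying a vertex of $K_s$ and a vertex of $K_t$, its diameter is $2$, the shared (cut) vertex is at distance $1$ from everybody, the $s-1$ remaining vertices of the $K_s$ part are mutually at distance $1$ and at distance $2$ from the $t-1$ remaining vertices of the $K_t$ part, and symmetrically for the $K_t$ part. So $D$ is a $3\times 3$ block matrix with blocks indexed by $\{$cut vertex$\}$, the $K_s$-side, and the $K_t$-side.

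Next I would compute $\det(\lambda I - D)$ by the quotient/equitable-partition reduction exactly as done before: perform row operations to peel off the $(\lambda+1)$ eigenvalues coming from within the two cliques (each clique side of size $s-1$ and $t-1$ contributes $\lambda+1$ factors, giving $(\lambda+1)^{(s-2)+(t-2)} = (\lambda+1)^{n-3}$ since $n=s+t-1$), reducing the determinant to a $3\times 3$ determinant whose entries are the row sums $\lambda-(s-2)$, $-1-(s-2)$, etc. Expanding that $3\times 3$ determinant yields the cubic $\lambda^{3}+(-s-t+4)\lambda^{2}+(2+s+t-3st)\lambda+s+t-2st$, establishing the characteristic polynomial formula.

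For the eigenvalue localisation I would analyse the cubic $f(x)=x^{3}+(-s-t+4)x^{2}+(2+s+t-3st)x+s+t-2st$. Using $(s-1)(t-1)>0$, i.e.\ $st+1>s+t$, I would check signs at the relevant test points: $f(0)=s+t-2st<0$, $f(-\tfrac{2}{3})<0$ (a short computation in $s,t$, again bounded using $st+1>s+t$), $f(-1)=st-s-t+1=(s-1)(t-1)>0$, and $f(-2)<0$. Together with $f(x)\to+\infty$ as $x\to+\infty$ and $f(x)\to-\infty$ as $x\to-\infty$, this forces exactly one root in each of $(0,+\infty)$, $(-1,-\tfrac{2}{3})$, and $(-\infty,-2)$, hence one root $<-2$ for $\lambda_n$ and $\lambda_2\in(-1,-\tfrac{2}{3})$. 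The remaining eigenvalues are all equal to $-1$ (multiplicity $n-3$), which pins down $\lambda_3=-1$ and, since there is no root of the cubic in $[-2,-1]$ other than the simple one $<-2$, also $\lambda_{n-1}=-1$.

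The main obstacle is purely computational: verifying $f(-\tfrac{2}{3})<0$ and $f(-2)<0$ for all $s,t\geq 2$ (as opposed to the cleaner bounds $f(-\tfrac12),f(-1)$ in the earlier theorems), which requires expressing these values in the form ``negative coefficient times $st$ plus lower-order terms'' and invoking $st+1>s+t$ to dominate the linear terms; I expect $f(-\tfrac23)$ to be the fussiest sign check. No genuinely new idea is needed beyond what Theorems~\ref{th1} and \ref{th2} already demonstrate.
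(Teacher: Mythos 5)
Your block reduction to $(\lambda+1)^{n-3}$ times a $3\times 3$ determinant and the sign checks at $0$, $-\tfrac{2}{3}$, $-1$ are exactly what the paper does. The genuine gap is your treatment of the smallest eigenvalue: you assert $f(-2)<0$, but in fact $f(-2)=4st-5s-5t+4=s(4t-5)-5t+4\ge 2(4t-5)-5t+4=3t-6\ge 0$ for all $s,t\ge 2$, with equality exactly at $s=t=2$. So the sign you rely on is wrong; and even if $f(-2)<0$ were true, combined with $f(-1)>0$ it would force a root of the cubic in $(-2,-1)$ — the situation of Theorem~\ref{th2} for $K^{s+t}_n$ — which is precisely what must \emph{not} happen here, since the cubic has only three roots and the statement requires $\lambda_{n-1}=-1$ with $\lambda_n<-2$. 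As written, your sign pattern neither holds nor (if it held) would it yield a root in $(-\infty,-2)$, so the conclusion $\lambda_n<-2$ does not follow from your argument.

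The paper gets around this by a different mechanism: its cubic analysis only locates a root in $(-\infty,-1)$ (using $f(-1)>0$ and $f(x)\to-\infty$), and then $\lambda_n<-2$ is proved separately — $P_3$ is an induced subgraph whose distance matrix is a principal submatrix of $D(K^{s,t}_n)$ (diameter $2$), so Lemma~\ref{le15} gives $\lambda_n\le\lambda_3(P_3)=-2$, and Lemma~\ref{le2} excludes $\lambda_n=-2$ because $K^{s,t}_n$ is not a complete multipartite graph. If you correct your computation to $f(-2)=4st-5s-5t+4>0$ for $(s,t)\neq(2,2)$, then your purely polynomial route does work in those cases (sign change between $-2$ and $-\infty$) and is even more elementary than the paper's interlacing detour; but note that at $s=t=2$ the graph is $P_3=K_{1,2}$, where $\lambda_n=-2$ and the asserted conclusions degenerate, so either that case must be excluded or one must argue as the paper does via Lemma~\ref{le2}.
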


\begin{proof}
The distance matrix of $K^{s,t}_{n}$ is
\begin{equation*}\begin{split}
D&=\left(\begin{array}{cccccccc}
0&\cdots&1&1&2&\cdots&2\\
\vdots&\ddots&\vdots&\vdots&\vdots&\vdots&\vdots\\
1&\cdots&0&1&2&\cdots&2\\
1&\cdots&1&0&1&\cdots&1\\
2&\cdots&2&1&0&\cdots&1\\
\vdots&\vdots&\vdots&\vdots&\vdots&\ddots&\vdots\\
2&\cdots&2&1&1&\cdots&0\\
\end{array}\right).\\
\end{split}\end{equation*}
Similar to the proof of Theorem \ref{th1}, we have
\begin{equation*}\begin{split}
&\mbox{det}(\lambda
I-D)=(\lambda+1)^{n-3}\left|\begin{array}{cccccccccc}
\lambda-(s-2)&-1&-2-2(t-2)\\
-1-(s-2)&\lambda&-1-(t-2)\\
-2-2(s-2)&-1&\lambda-(t-2)\\
\end{array}\right|\\
&=(\lambda+1)^{n-3}[\lambda^{3}+(-s-t+4)\lambda^{2}+(2+s+t-3st)\lambda+s+t-2st].
\end{split}\end{equation*}
Consider the cubic function on $x$
$$f(x)=x^{3}+(-s-t+4)x^{2}+(2+s+t-3st)x+s+t-2st.$$ Note that
$(s-1)(t-1)=st-s-t+1>0$, then $st+1>s+t$. By a simple calculation,
we have
\begin{equation*}\begin{split}
\begin{cases}
f(0)=s+t-2st<1-st<0,\\
f(-\frac{2}{3})=\frac{4}{27}-\frac{1}{9}s-\frac{1}{9}t<0,\\
f(-1)=1-s-t+st>0.
\end{cases}
\end{split}\end{equation*}
Note that $f(x)\rightarrow +\infty~(x\rightarrow +\infty)$ and
$f(0)<0$, so there is at least one root in $(0,+\infty)$. Since
$f(-\frac{2}{3})<0$ and $f(-1)>0$, then there is at least one root
in $(-1,-\frac{2}{3})$. Since $f(-1)>0$ and $f(x)\rightarrow
-\infty~(x\rightarrow -\infty)$, then there is at least one root in
$(-\infty,-1)$. Thus there is exactly one root in each interval.
This means that $\lambda_{1}>0$, $-1<\lambda_{2}<-\frac{2}{3}$,
$\lambda_{3}=\lambda_{n-1}=-1$ and $\lambda_{n}<-1$.

Obviously, the diameter of $K^{s,t}_{n}$ is 2, and $P_{3}$ is an
induced subgraph of $K^{s,t}_{n}$. Moreover, $D(P_{3})$ is a
principal submatrix of $D(K^{s,t}_{n}).$ It is easy to calculate
that $\lambda_{3}(P_{3})=-2,$ then by Lemma \ref{le15},
$\lambda_{n}(K^{s,t}_{n})\leq \lambda_{3}(P_{3})=-2.$ Furthermore,
$K^{s,t}_{n}$ is not a complete $k$-partite graph, then by Lemma
\ref{le2}, we have $\lambda_{n}<-2.$ \qed
\end{proof}

By Theorems \ref{th1}, \ref{th2} and \ref{th3}, we obtain the
following corollary.

\begin{corollary}\label{co1}
No two non-isomorphic graphs of $K^{h}_{n},$ $K^{s+t}_{n}$ and
$K^{s,t}_{n}$ are $D$-cospectral.
\end{corollary}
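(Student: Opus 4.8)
The plan is to read two integer invariants off the explicit distance characteristic polynomials of Theorems~\ref{th1}, \ref{th2} and~\ref{th3}: the multiplicity $m_{-1}$ of the eigenvalue $-1$ and the multiplicity $m_{-2}$ of the eigenvalue $-2$ in the distance spectrum. Since $D$-cospectral graphs have distance matrices of the same order, I may fix the number of vertices $n$ and suppose that two of the graphs in question, both on $n$ vertices, have equal distance characteristic polynomial. The sign computations already carried out in the proofs of those three theorems show that the cubic, quartic, resp.\ cubic factor occurring there does not vanish at $-1$ (its value there is $(n-h)(h-1)$, $(s-1)(t-1)$ and $(s-1)(t-1)$, all positive), and does not vanish at $-2$ for $K^h_n$ (value $3(n-h)(h-2)>0$) nor for $K^{s+t}_n$ (value $6s+6t-9st<0$). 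Hence $m_{-1}=h-2$ and $m_{-2}=n-h-1$ for $K^h_n$; $m_{-1}=n-4$ and $m_{-2}=0$ for $K^{s+t}_n$; and $m_{-1}=n-3$ for $K^{s,t}_n$.

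Now suppose two graphs from these three families, both on $n$ vertices, are $D$-cospectral; there are six cases according to the two types involved. If both are of type $K^h_n$, equality of $m_{-1}$ forces the two values of $h$ to coincide, so the graphs are isomorphic. If both are of type $K^{s+t}_n$, or both of type $K^{s,t}_n$, cancel the common factor $(\lambda+1)^{n-4}$, resp.\ $(\lambda+1)^{n-3}$; substituting $s+t=n$, resp.\ $s+t=n+1$, into the polynomials of Theorems~\ref{th2} and~\ref{th3} shows that every remaining coefficient depends on the unordered pair $\{s,t\}$ only through the product $st$, so $st$, and hence $\{s,t\}$, is determined, and again the graphs are isomorphic. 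If one graph is $K^h_n$ and the other $K^{s+t}_n$, equality of $m_{-2}$ forces $h=n-1$, whereupon equality of $m_{-1}$ demands $n-3=n-4$, a contradiction. If one is $K^{s+t}_n$ and the other $K^{s,t}_n$, equality of $m_{-1}$ already demands $n-4=n-3$, a contradiction. If one is $K^h_n$ and the other $K^{s,t}_n$, equality of $m_{-1}$ forces $h=n-1$; substituting $h=n-1$ into the cubic of Theorem~\ref{th1} and matching it against the cubic of Theorem~\ref{th3} gives $st=2(n-1)$, which together with $s+t=n+1$ yields $\{s,t\}=\{2,n-1\}$.

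The point that requires attention is precisely this last conclusion: $K^{n-1}_n$ and $K^{2,\,n-1}_n$ are \emph{one and the same graph} --- namely $K_{n-1}$ with a single pendant edge --- so the agreement of their distance polynomials reflects a genuine isomorphism, not a counterexample, and I would spell this identification out explicitly (checking that for $n\ge 4$ both descriptions are legitimate under the standing parameter conventions, while the degenerate instance $n=3$ forces $s=t=2$ and gives the complete bipartite graph $P_{3}$, which is outside the discussion). Apart from that, the work is routine: the substitutions $s+t=n$ or $s+t=n+1$ into the polynomials of Theorems~\ref{th2} and~\ref{th3}, and extracting $st$ from a single coefficient. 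I expect no real obstacle, because for fixed $n$ the pair $(m_{-1},m_{-2})$ separates the three families entirely, save for that single honest coincidence.
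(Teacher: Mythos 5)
Your proposal is correct and follows essentially the same route as the paper: both arguments read the multiplicities of the eigenvalues $-1$ and $-2$ off the explicit factorizations in Theorems~\ref{th1}--\ref{th3} to separate the families, and in the only delicate case ($K^{h}_{n}$ versus $K^{s,t}_{n}$) both reduce to matching the cubic factors, finding $\{s,t\}=\{2,n-1\}$ and observing that the two descriptions then give the same graph $K_{n-1}$ with a pendant edge. The only differences are cosmetic (you force $h=n-1$ via the multiplicity of $-1$ where the paper uses the absence of $-2$, and you spell out the within-family comparisons that the paper calls obvious), so no further changes are needed.
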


\begin{proof}
From the distance characteristic polynomials of $K^{h}_{n},$
$K^{s+t}_{n}$ and $K^{s,t}_{n}$, for any two non-isomorphic graphs
belonging to the same type, the result is obvious.

It is clear that $K^{s+t}_{n}$ and $K^{s,t}_{n}$ have distinct
distance spectrum, since -1 is the distance eigenvalue of
$K^{s+t}_{n}$ with multiplicity $n-4$, and is the distance
eigenvalue of $K^{s,t}_{n}$ with multiplicity $n-3,$ respectively.

Now we only need to prove that $K^{h}_{n}$ has distinct distance
spectrum with $K^{s+t}_{n}$ and $K^{s,t}_{n}$.

Suppose that $K^{h}_{n}$ and $K^{s+t}_{n}$ are $D$-cospectral. Note
that -1 is the distance eigenvalue of $K^{s+t}_{n}$ with
multiplicity $n-4,$ then -1 is also the distance eigenvalue of
$K^{h}_{n}$ with multiplicity $n-4.$ On the other hand, notice that
-2 is not the distance eigenvalue of $K^{s+t}_{n}$, then it follows
that -2 is also not the distance eigenvalues of $K^{h}_{n}$, thus
$n=h+1$. Then -1 is the distance eigenvalue of $K^{h}_{n}$ with
multiplicity $n-3$, a contradiction.

Assume that $K^{h}_{n}$ and $K^{s,t}_{n}$ are $D$-cospectral. Note
that -2 is not the distance eigenvalue of $K^{s,t}_{n},$ then it
follows that -2 is also not the distance eigenvalue of $K^{h}_{n}$,
so $n=h+1$. Then we have
\begin{equation*}\begin{split}
\begin{cases}
P_{D(K^{h}_{n})}(\lambda)=(\lambda+1)^{n-3}[\lambda^{3}+(-n+3)\lambda^{2}+(-5n+9)\lambda-3n+5],\\
P_{D(K^{s,t}_{n})}(\lambda)=(\lambda+1)^{n-3}[\lambda^{3}+(-s-t+4)\lambda^{2}+(2+s+t-3st)\lambda+s+t-2st].
\end{cases}
\end{split}\end{equation*}
Note that they have the same distance characteristic polynomial,
hence
\begin{equation*}\begin{split}
\begin{cases}
-3n+5=s+t-2st,\\
n=s+t-1.
\end{cases}
\end{split}\end{equation*}
Solving the two equations we get $t=2$ or $t=n-1$, a contradiction.\qed
\end{proof}

\section{Main results}

~~~~In this section, our first task is to show that $K^{h}_{n},$
$K^{s+t}_{n}$ and $K^{s,t}_{n}$ are determined by their $D$-spectra.
First, we give some useful graphs and their distance spectra.

\vskip 0.5cm
\begin{center}
\setlength{\unitlength}{1.0mm}
\begin{picture}(300,10)

\put(3,8){$P_{4}$} \put(10,9){\line(1,0){18}}
\put(10,9){\circle*{1}} \put(16,9){\circle*{1}}
\put(22,9){\circle*{1}} \put(28,9){\circle*{1}}

\put(3,2){$P_{5}$} \put(10,3){\line(1,0){24}}
\put(10,3){\circle*{1}} \put(16,3){\circle*{1}}
\put(22,3){\circle*{1}} \put(28,3){\circle*{1}}
\put(34,3){\circle*{1}}

\put(45,-3){$C_{4}$} \put(44,2){\line(1,0){8}}
\put(44,10){\line(1,0){8}} \put(52,2){\line(0,1){8}}
\put(44,2){\line(0,1){8}} \put(44,2){\circle*{1}}
\put(44,10){\circle*{1}} \put(52,2){\circle*{1}}
\put(52,10){\circle*{1}} \put(68,-3){$C_{5}$}
\put(62,2){\line(1,0){16}} \put(62,2){\circle*{1}}
\put(70,2){\circle*{1}} \put(78,2){\circle*{1}}
\put(62,2){\line(0,1){8}} \put(62,10){\line(1,0){8}}
\put(78,2){\line(-1,1){8}} \put(70,10){\circle*{1}}
\put(62,10){\circle*{1}} \put(98,-3){$H_{1}$}
\put(88,2){\line(1,0){24}} \put(104,10){\circle*{1}}
\put(88,2){\circle*{1}} \put(96,2){\circle*{1}}
\put(104,2){\circle*{1}} \put(112,2){\circle*{1}}
\put(96,2){\line(1,1){8}} \put(104,2){\line(0,1){8}}
\put(88,2){\line(2,1){16}} \put(112,2){\line(-1,1){8}}
\put(132,-3){$H_{2}$} \put(122,2){\line(1,0){24}}
\put(130,10){\circle*{1}} \put(122,2){\circle*{1}}
\put(130,2){\circle*{1}} \put(138,2){\circle*{1}}
\put(146,2){\circle*{1}} \put(130,2){\line(0,1){8}}
\put(122,2){\line(1,1){8}} \put(138,2){\line(-1,1){8}}

\put(20,-20){$H_{3}$}
\put(10,-15){\line(1,0){24}} \put(10,-15){\line(1,0){16}} \put(10,-15){\circle*{1}} \put(18,-15){\circle*{1}} \put(26,-15){\circle*{1}} \put(34,-15){\circle*{1}} \put(18,-7){\circle*{1}} \put(18,-15){\line(0,1){8}} \put(26,-15){\line(-1,1){8}}

\put(49,-20){$H_{4}$}
\put(44,-15){\line(1,0){16}} \put(44,-15){\circle*{1}} \put(52,-15){\circle*{1}} \put(60,-15){\circle*{1}} \put(44,-7){\circle*{1}} \put(52,-7){\circle*{1}}
\put(44,-15){\line(0,1){8}} \put(52,-15){\line(0,1){8}} \put(52,-15){\line(-1,1){8}} \put(44,-7){\line(1,0){8}}
\put(110,-20){$H_{6}$}
\put(102,-11){\line(1,0){18}} \put(102,-11){\circle*{1}} \put(108,-11){\circle*{1}} \put(114,-11){\circle*{1}} \put(120,-11){\circle*{1}} \put(104,-15){\circle*{1}} \put(104,-7){\circle*{1}} \put(108,-11){\line(-1,1){4}} \put(108,-11){\line(-1,-1){4}}

\put(80,-20){$H_{5}$}
\put(70,-15){\line(1,0){24}} \put(70,-15){\circle*{1}} \put(78,-15){\circle*{1}}  \put(86,-15){\circle*{1}} \put(94,-15){\circle*{1}}
\put(78,-15){\line(0,1){8}} \put(78,-15){\line(-1,1){8}} \put(78,-7){\circle*{1}} \put(70,-7){\circle*{1}} \put(70,-15){\line(0,1){8}}

\put(135,-20){$H_{7}$}
\put(128,-15){\line(1,0){18}} \put(128,-15){\circle*{1}} \put(134,-15){\circle*{1}} \put(140,-15){\circle*{1}} \put(146,-15){\circle*{1}} \put(134,-9){\circle*{1}} \put(140,-9){\circle*{1}} \put(134,-15){\line(0,1){6}} \put(140,-15){\line(0,1){6}}

\put(19,-37){$H_{8}$}
\put(10,-32){\line(1,0){21}} \put(10,-32){\circle*{1}} \put(17,-32){\circle*{1}} \put(24,-32){\circle*{1}} \put(31,-32){\circle*{1}} \put(10,-25){\circle*{1}} \put(24,-25){\circle*{1}} \put(10,-32){\line(0,1){7}} \put(17,-32){\line(-1,1){7}} \put(24,-32){\line(0,1){7}}

\put(46,-37){$H_{9}$}
\put(40,-28){\line(1,0){16}} \put(40,-28){\line(1,1){4}} \put(40,-28){\line(1,-1){4}} \put(44,-32){\line(0,1){8}} \put(48,-28){\line(-1,1){4}} \put(48,-28){\line(-1,-1){4}} \put(56,-28){\line(-3,1){12}} \put(56,-28){\line(-3,-1){12}}
\put(40,-28){\circle*{1}} \put(48,-28){\circle*{1}} \put(56,-28){\circle*{1}} \put(44,-24){\circle*{1}} \put(44,-32){\circle*{1}}

\put(72,-37){$H_{10}$}
\put(66,-32){\line(1,0){16}} \put(66,-32){\line(1,1){8}} \put(74,-32){\line(0,1){8}} \put(74,-32){\line(1,1){8}} \put(82,-32){\line(-1,1){8}} \put(74,-24){\line(1,0){8}} \put(66,-32){\circle*{1}} \put(74,-32){\circle*{1}} \put(82,-32){\circle*{1}} \put(74,-24){\circle*{1}} \put(82,-24){\circle*{1}}

\put(98,-37){$H_{12}$}
\put(92,-32){\line(1,0){16}} \put(92,-32){\line(1,1){8}} \put(100,-32){\line(0,1){8}} \put(100,-32){\line(1,1){8}} \put(108,-32){\line(-1,1){8}} \put(100,-24){\line(1,0){8}} \put(92,-32){\circle*{1}} \put(100,-32){\circle*{1}} \put(108,-32){\circle*{1}} \put(100,-24){\circle*{1}} \put(108,-24){\circle*{1}} \put(100,-32){\line(-1,1){8}} \put(100,-24){\line(-1,0){8}} \put(92,-32){\line(0,1){8}} \put(108,-32){\line(0,1){8}} \put(92,-24){\circle*{1}}
\put(16,-54){$H_{13}$}
\put(10,-49){\line(1,0){16}} \put(10,-49){\line(0,1){8}} \put(26,-49){\line(0,1){8}} \put(18,-49){\line(0,1){8}} \put(18,-49){\line(1,1){8}} \put(18,-49){\line(-1,1){8}} \put(10,-49){\circle*{1}} \put(18,-49){\circle*{1}} \put(18,-41){\circle*{1}}\put(26,-49){\circle*{1}} \put(10,-41){\circle*{1}} \put(26,-41){\circle*{1}}
\put(46,-54){$B_{1}$}
\put(36,-49){\line(1,0){24}} \put(44,-49){\line(0,1){8}} \put(36,-49){\circle*{1}} \put(44,-49){\circle*{1}} \put(52,-49){\circle*{1}} \put(60,-49){\circle*{1}} \put(44,-41){\circle*{1}}
\put(72,-54){$B_{2}$}
\put(70,-49){\line(1,0){8}} \put(70,-49){\line(0,1){8}} \put(70,-49){\line(1,1){8}} \put(78,-49){\line(0,1){8}} \put(70,-41){\line(1,0){8}} \put(70,-49){\circle*{1}} \put(78,-49){\circle*{1}} \put(70,-41){\circle*{1}} \put(78,-41){\circle*{1}}
\put(94,-54){$B_{3}$}
\put(88,-49){\circle*{1}} \put(96,-49){\circle*{1}} \put(104,-49){\circle*{1}} \put(88,-41){\circle*{1}} \put(96,-41){\circle*{1}}
\put(88,-49){\line(1,0){16}} \put(88,-49){\line(0,1){8}} \put(88,-49){\line(1,1){8}} \put(88,-41){\line(1,0){8}} \put(96,-49){\line(-1,1){8}} \put(96,-49){\line(0,1){8}} \put(104,-49){\line(-1,1){8}}
\put(135,-40){$H_{11}$}
\put(120,-40){\circle*{1}} \put(130,-40){\circle*{1}} \put(120,-50){\circle*{1}} \put(120,-30){\circle*{1}} \put(130,-30){\circle*{1}} \put(125,-25){\circle*{1}} \put(120,-30){\line(1,0){10}} \put(120,-40){\line(1,0){10}} \put(120,-30){\line(0,-1){10}} \put(130,-30){\line(0,-1){10}}
\put(125,-25){\line(-1,-1){5}} \put(125,-25){\line(-1,-3){5}} \put(125,-25){\line(1,-3){5}} \put(125,-25){\line(1,-1){5}} \put(120,-30){\line(1,-1){10}} \put(120,-40){\line(1,1){10}} \put(120,-50){\line(0,1){10}} \put(120,-50){\line(1,1){10}}
\end{picture}
\vskip 5.5cm Fig. $3$. Graphs $P_{4}$, $P_{5}$, $C_{4}$, $C_{5},$
$H_{1}-H_{13}$ and $B_{1}-B_{3}$.
\end{center}

Next, we firstly show that $K^{h}_{n}$ is determined by its
$D$-spectrum. Let $G$ be a graph $D$-cospectral to $K^{h}_{n}$. We
call $H$ a forbidden subgraph of $G$ if $G$ contains no $H$ as an
induced subgraph.

\begin{lemma}\label{th4}
If $G$ and $K^{h}_{n}$ are $D$-cospectral, then $C_{4}$, $C_{5}$ and
$H_{i}~(i\in\{1,4,9,10,11,12,13\})$ are forbidden subgraphs of $G$.
\end{lemma}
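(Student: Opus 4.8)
My plan is to argue by contradiction, applying the interlacing inequalities of Lemma~\ref{le15} to the eigenvalue data coming from Theorem~\ref{th1} and from the table of distance spectra.

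First I would record the features of the $D$-spectrum of $K_n^h$ that the argument needs. Writing $\lambda_1\ge\lambda_2\ge\cdots\ge\lambda_n$ for the (common) $D$-spectrum of $K_n^h$ and of $G$, Theorem~\ref{th1} together with the explicit characteristic polynomial gives: $-1<\lambda_2<-\tfrac12$, $\lambda_3=-1$, and every eigenvalue other than $\lambda_1,\lambda_2,\lambda_n$ equals $-1$ or $-2$; in particular $\lambda_{n-1}\ge-2$ and $\lambda_n$ is the unique distance eigenvalue strictly below $-2$.

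The second ingredient is that each of $C_4$, $C_5$, $H_1$, $H_4$, $H_9$, $H_{10}$, $H_{11}$, $H_{12}$, $H_{13}$ has diameter $2$, which one reads off Fig.~2. This is exactly what lets interlacing be applied: if such a graph $H$ on $m$ vertices occurs as an induced subgraph of $G$, then for non-adjacent $u,v\in V(H)$ there is a common neighbour of $u,v$ inside $H\subseteq G$, so $d_G(u,v)=2=d_H(u,v)$, while adjacent vertices are at distance $1$ in both; hence $D(H)$ is a principal submatrix of $D(G)$, and Lemma~\ref{le15} yields $\lambda_{n-m+i}\le\mu_i(H)\le\lambda_i$ for $1\le i\le m$, where $\mu_1(H)\ge\cdots\ge\mu_m(H)$ are the distance eigenvalues of $H$ listed in the table.

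It then suffices to exhibit, for each of the nine graphs, a table entry that breaks one of these inequalities. For $C_4$ ($\mu_2=0$), $C_5$ and $H_1$ ($\mu_2=-0.382$), $H_9$ ($\mu_2=-0.4495$), $H_{10}$ ($\mu_2=-0.3723$), $H_{11}$ ($\mu_2=-0.4913$) and $H_{12}$ ($\mu_2=-0.4641$) one has $\mu_2(H)\ge-\tfrac12$, whereas interlacing forces $\mu_2(H)\le\lambda_2<-\tfrac12$, a contradiction. The two remaining cases are where the real care is needed, since $\mu_2(H_4)=-0.558<-\tfrac12$ and $\mu_2(H_{13})=-0.6303<-\tfrac12$, so one must look deeper into the spectrum: for $H_4$ I would use $\mu_3(H_4)=-0.7667>-1=\lambda_3$, contradicting $\mu_3(H_4)\le\lambda_3$; and for $H_{13}$ (six vertices) I would use $\mu_5(H_{13})=-2.2223<-2$, so that $\lambda_{n-1}=\lambda_{n-6+5}\le\mu_5(H_{13})<-2$, contradicting $\lambda_{n-1}\ge-2$. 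I expect the main obstacle to be precisely this last step: realizing that $H_4$ and $H_{13}$ must be ruled out using the finer information of Theorem~\ref{th1} --- the exact value $\lambda_3=-1$, respectively the fact that $\lambda_n$ is the only distance eigenvalue of $K_n^h$ below $-2$ --- rather than the crude bound on $\lambda_2$ that disposes of the other seven graphs.
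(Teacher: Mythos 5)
Your proposal is correct and follows essentially the same route as the paper: apply Lemma~\ref{le15} (using that each candidate graph has diameter $2$, so its distance matrix is a principal submatrix of $D(G)$) and contradict $\lambda_2(G)<-\tfrac12$ for $C_4$, $C_5$, $H_1$, $H_9$--$H_{12}$, $\lambda_3(G)=-1$ for $H_4$, and $\lambda_{n-1}(G)\ge -2$ for $H_{13}$. The only difference is that you spell out why $D(H)$ embeds as a principal submatrix, a step the paper treats as obvious.
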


\begin{proof}
Let $G$ and $K^{h}_{n}$ have the same distance spectrum. Suppose
that $H$ is an induced subgraph of $G$ and $H\in \{C_{4}$, $C_{5},$
$H_{i}~(i\in\{1,4,9,10,11,12,13\})\}$. Note that $\mbox{diam}(H)=2$,
obviously $D(H)$ is a principal submatrix of $D(G)$. Let $|V(H)|=m$,
by Lemma \ref{le15}, then $\lambda_{2}(G)\geq\lambda_{2}(H)$,
$\lambda_{3}(G)\geq\lambda_{3}(H)$ and
$\lambda_{m-1}(H)\geq\lambda_{n-1}(G)$. By Theorem \ref{th1}, we
know that $-1<\lambda_{2}(G)<-\frac{1}{2}$, $\lambda_{3}(G)=-1$ and
$\lambda_{n-1}(G)\in \{-1,-2\}.$ Hence we have
$\lambda_{2}(H)<-\frac{1}{2}$, $\lambda_{3}(H)\leq -1$ and
$\lambda_{m-1}(H)\geq -2$. However $\lambda_{2}\geq-\frac{1}{2}$ for
$C_{4}$, $C_{5}$ and $H_{i}~(i\in \{1,9,10,11,12\})$;
$\lambda_{3}>-1$ for $H_{4}$ and $\lambda_{m-1}<-2$ for $H_{13}$, a
contradiction. \qed
\end{proof}

\begin{center}
\setlength{\unitlength}{1.0mm}
\begin{picture}(300,20)
\put(17,0){$P_{5}$}
\put(5,9){\line(1,0){28}} \put(5,9){\circle*{1}} \put(12,9){\circle*{1}} \put(19,9){\circle*{1}} \put(26,9){\circle*{1}} \put(33,9){\circle*{1}}
\put(4,11){$v_{1}$} \put(11,11){$v_{2}$} \put(18,11){$v_{3}$} \put(25,11){$v_{4}$} \put(32,11){$v_{5}$}

\put(55,0){$H_{2}$}
\put(45,9){\line(1,0){24}} \put(45,9){\circle*{1}} \put(53,9){\circle*{1}} \put(61,9){\circle*{1}} \put(69,9){\circle*{1}}
\put(53,17){\circle*{1}} \put(53,17){\line(0,-1){8}} \put(53,17){\line(-1,-1){8}} \put(53,17){\line(1,-1){8}}
\put(44,6){$v_{1}$} \put(52,6){$v_{2}$} \put(60,6){$v_{3}$} \put(68,6){$v_{4}$} \put(52,19){$v_{5}$}

\put(93,0){$H_{3}$}
\put(83,9){\line(1,0){24}} \put(83,9){\circle*{1}} \put(91,9){\circle*{1}} \put(99,9){\circle*{1}} \put(107,9){\circle*{1}}
\put(91,17){\circle*{1}} \put(91,17){\line(0,-1){8}} \put(91,17){\line(1,-1){8}}
\put(82,6){$v_{1}$} \put(90,6){$v_{2}$} \put(98,6){$v_{3}$} \put(106,6){$v_{4}$} \put(90,19){$v_{5}$}

\put(131,0){$H_{5}$}
\put(121,9){\line(1,0){24}} \put(121,9){\circle*{1}} \put(129,9){\circle*{1}} \put(137,9){\circle*{1}} \put(145,9){\circle*{1}}
\put(121,17){\circle*{1}} \put(121,17){\line(0,-1){8}} \put(121,17){\line(1,-1){8}}
\put(129,17){\circle*{1}} \put(129,17){\line(0,-1){8}}
\put(120,6){$v_{1}$} \put(128,6){$v_{2}$} \put(136,6){$v_{3}$} \put(144,6){$v_{4}$} \put(128,19){$v_{5}$} \put(120,19){$v_{6}$}

\end{picture}
\vskip 0.5cm Fig. $4$. The labeled graphs of $P_{5}$, $H_{2}$,
$H_{3}$ and $H_{5}$.
\end{center}

\begin{tabular}{c|c|c|c|c|c|c}
\hline
 & $\lambda_{1}$ & $\lambda_{2}$ & $\lambda_{3}$ & $\lambda_{4}$ & $\lambda_{5}$ & $\lambda_{6}$\\
\hline
$P_{4}$&5.1623&-0.5858&-1.1623&-3.4142&&\\
\hline
$P_{5}$&8.2882&-0.5578&-0.7639&-1.7304&-5.2361&\\
\hline
$C_{4}$&4.0000&0.0000&-2.0000&-2.0000&&\\
\hline
$C_{5}$&6.0000&-0.3820&-0.3820&-2.6180&-2.6180&\\
\hline
$H_{1}$&5.2926&-0.3820&-0.7217&-1.5709&-2.6180&\\
\hline
$H_{2}$&6.2162&-0.4521&-1.0000&-1.1971&-3.5669&\\
\hline
$H_{3}$&6.6375&-0.5858&-0.8365&-1.8010&-3.4142&\\
\hline
$H_{4}$&5.7596&-0.5580&-0.7667&-2.0000&-2.4348&\\
\hline
$H_{5}$&9.3154&-0.5023&-1.0000&-1.0865&-2.3224&-4.4042\\
\hline
$H_{6}$&9.6702&-0.4727&-1.0566&-2.0000&-2.0000&-4.1409\\
\hline
$H_{7}$&10.0000&-0.4348&-1.0000&-2.0000&-2.0000&-4.5616\\
\hline
$H_{8}$&9.6088&-0.4931&-1.0000&-1.0924&-2.0000&-5.0233\\
\hline
$H_{9}$&4.4495&-0.4495&-1.0000&-1.0000&-2.0000&\\
\hline
$H_{10}$&5.3723&-0.3723&-1.0000&-2.0000&-2.0000&\\
\hline
$H_{11}$&6.1425&-0.4913&-1.0000&-1.0000&-1.0000&-2.6512\\
\hline
$H_{12}$&6.4641&-0.4641&-1.0000&-1.0000&-1.0000&-3.0000\\
\hline
$H_{13}$&7.8526&-0.6303&-1.0000&-1.0000&-2.2223&-3.0000\\
\hline
$B_{1}$&7.4593&-0.5120&-1.0846&-2.0000&-3.8627&\\
\hline
$B_{2}$&3.5616&-0.5616&-1.0000&-2.0000&&\\
\hline
$B_{3}$&4.9018&-0.5122&-1.0000&-1.0000&-2.3896&\\
\hline
\end{tabular}\\\\

For any $S\subseteq V(G)$, let $D_{G}(S)$ denote the principal
submatrix of $D(G)$ obtained by $S$.

\begin{lemma}\label{th13}
If $G$ and $K^{h}_{n}$ are $D$-cospectral, then $P_{5}$ and
$H_{i}~(i\in\{2,3,5,6,7,8\})$ are forbidden subgraphs of $G$.
\end{lemma}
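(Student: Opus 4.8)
The plan is to extend the forbidden-subgraph technique of Lemma~\ref{th4}. The new feature is that $P_5$ and $H_i~(i\in\{2,3,5,6,7,8\})$ all have diameter $3$, so $D(H)$ is in general \emph{not} a principal submatrix of $D(G)$: an induced copy of such an $H$ inside $G$ may have some of its pairs drawn closer together by vertices of $G$ outside $V(H)$. Accordingly I would argue not with $D(H)$ itself but with the genuine principal submatrix $M:=D_G(S)$ of $D(G)$ cut out by $S:=V(H)$, after first pinning down its entries. Since $H$ is an induced subgraph, two vertices of $S$ adjacent in $H$ remain adjacent in $G$, and $d_G(u,v)\le d_H(u,v)$ for all $u,v$; hence a pair at $H$-distance $1$ contributes a $1$ to $M$, a pair at $H$-distance $2$ is non-adjacent in $G$ and so contributes exactly a $2$, a pair at $H$-distance $3$ contributes a $2$ or a $3$, and the unique pair at $H$-distance $4$ (namely $v_1v_5$ in $P_5$) contributes a $2$, $3$ or $4$ --- always subject to the triangle inequality. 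With the labelling of Fig.~$3$ (and the obvious analogues for $H_6,H_7,H_8$) this leaves one undetermined entry for $H_2$ and $H_3$, three for $P_5$, $H_5$ and $H_6$, and four for $H_7$ and $H_8$; the triangle inequality cuts these possibilities down further, so $M$ ranges over a short, explicit list of symmetric integer matrices.

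Next I would feed each candidate $M$ (of order $m\in\{5,6\}$) into the interlacing inequalities. If $G$ is $D$-cospectral with $K^h_n$, then by Theorem~\ref{th1} its distance spectrum satisfies $-1<\lambda_2(G)<-\tfrac12$, $\lambda_3(G)=-1$ and $\lambda_{n-1}(G)\in\{-1,-2\}$, so Lemma~\ref{le15} forces
$$\mu_2(M)<-\tfrac12,\qquad \mu_3(M)\le-1,\qquad \mu_{m-1}(M)\ge-2$$
(equivalently, $\mu_2(M)<-\tfrac12$ and $\mu_i(M)\in[-2,-1]$ for $3\le i\le m-1$). The content of the lemma is then that \emph{every} matrix on the lists violates at least one of these. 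The ``no-collapse'' extreme of each list is simply $D(H)$, and the table following Fig.~$2$ already disposes of it: $D(P_5)$ and $D(H_3)$ have $\mu_3=-0.7639$ and $-0.8365$, both $>-1$; $D(H_5)$ has $\mu_5=-2.3224<-2$; and $D(H_2),D(H_6),D(H_7),D(H_8)$ have $\mu_2\approx-0.45,-0.47,-0.43,-0.49$ respectively, all exceeding $-\tfrac12$. For a partially or fully collapsed $M$ one computes its characteristic polynomial directly; for instance the fully collapsed $P_5$-matrix factors as $(\lambda+3)(\lambda+1)(\lambda^3-4\lambda^2-15\lambda-4)$, whose second root lies in $(-\tfrac12,0)$ (and whose fourth eigenvalue is below $-2$), so this $M$ cannot occur, and likewise for every other pattern.

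The routine but heaviest part is this last step for the $6$-vertex graphs: $H_6$ has only a handful of collapse patterns, but $H_7$ and $H_8$ have on the order of a dozen apiece, each requiring a $6\times6$ determinant. I would lighten the labour by exploiting the reflection symmetry $i\mapsto m+1-i$ that several of these matrices inherit from the drawings (it block-diagonalises $M$ into a ``palindromic'' and an ``antipalindromic'' part of roughly half the size), and by discarding any candidate that already contains an impossible $5\times5$ principal submatrix. The genuinely delicate point is getting the enumeration in the first step right --- using the triangle inequality together with the already-established spectral bounds to make sure no realisable collapse pattern is overlooked, so that the finite verification in the last step is truly exhaustive.
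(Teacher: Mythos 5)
Your overall framework --- replace $D(H)$ by the genuine principal submatrix $D_G(V(H))$, enumerate the possible collapsed entries, and kill each candidate by interlacing against $-1<\lambda_2(G)<-\tfrac{1}{2}$, $\lambda_3(G)=-1$, $\lambda_{n-1}(G)\in\{-1,-2\}$ --- is exactly what the paper does for the five-vertex graphs $P_5$, $H_2$, $H_3$, and your counts of undetermined entries, your interlacing inequalities, and your sample computation for the fully collapsed $P_5$ are all correct. The gap is in how you dispose of $H_5,H_6,H_7,H_8$: for these graphs the entire content of the lemma is your sentence that \emph{every} matrix on the lists violates at least one of the three inequalities, and you neither verify this nor give any reason it must hold. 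You check only the uncollapsed matrices (via the table after Fig.~2) and one collapsed $P_5$ pattern, and then assert ``likewise for every other pattern.'' Spot checks suggest the assertion is in fact true (for instance the fully collapsed $H_5$ and $H_7$ matrices do have $\mu_2>-\tfrac{1}{2}$), but nothing in your argument guarantees it; moreover, since your enumeration is governed only by the triangle inequality, you are committed to killing by eigenvalues even patterns that are not graph-realizable, and if a single one of the dozens of $6\times 6$ candidates happened to satisfy all three inequalities the proof as written would collapse. As it stands, this is a plan together with a large unperformed finite verification, not a proof.

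The paper closes exactly this hole with a structural observation you could adopt: if a pair at $H$-distance $3$ satisfies $d_G=2$, then the common neighbour witnessing this lies outside $V(H)$ (a pair at distance $3$ has no common neighbour inside $H$, and $H$ is induced), and this outside vertex together with the vertices of a shortest $H$-path between the pair induces $C_4$, $C_5$ or $H_1$ in $G$, all of which are already forbidden by Lemma \ref{th4}. Hence for $H_5$--$H_8$ no collapse can occur at all, only the uncollapsed matrices $D(H_i)$ need an eigenvalue check, and the table finishes them off ($\lambda_2(H_6),\lambda_2(H_7),\lambda_2(H_8)>-\tfrac{1}{2}$ and $\lambda_5(H_5)<-2$). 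This single remark eliminates your heaviest step (the $6\times 6$ enumeration, the symmetry reductions, and the $5\times 5$ exclusion tests) and is the idea your proposal is missing; without it, you must actually carry out and report the exhaustive computation you have only promised.
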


\begin{proof}
For $P_{5}$. Suppose that $P_{5}$ is an induced subgraph of $G$,
then $d_{G}(v_{1},v_{5})\in \{2,3,4\}$. If $d_{G}(v_{1},v_{5})=4$,
then $D_{G}(\{v_{1},v_{2},v_{3},v_{4},v_{5}\})=D(P_{5})$ is a
principal submatrix of $D(G)$. By Lemma \ref{le15}, we have
$\lambda_{3}(G)\geq \lambda_{3}(P_{5})=-0.7639>-1$, a contradiction.
If $d_{G}(v_{1},v_{5})\in \{2,3\}$, let $d_{G}(v_{1},v_{4})=a$,
$d_{G}(v_{1},v_{5})=b$ and $d_{G}(v_{2},v_{5})=c$, then $a,b,c\in
\{2,3\}$. We get the principal submatrix of $D(G)$
\begin{equation*}\begin{split}
D_{G}(\{v_{1},v_{2},v_{3},v_{4},v_{5}\})&=\left(\begin{array}{cccccccc}
0&1&2&a&b\\
1&0&1&2&c\\
2&1&0&1&2\\
a&2&1&0&1\\
b&c&2&1&0\\
\end{array}\right).\\
\end{split}\end{equation*}
By a simple calculation, we have\\\\
\begin{tabular}{c|c|c|c|c|c|c|c|c}
\hline
$(a,b,c)$ & $(3,3,3)$ & $(3,2,2)$ & $(3,2,3)$ & $(3,3,2)$ & $(2,3,3)$ & $(2,3,2)$ & $(2,2,2)$ & $(2,2,3)$\\
\hline
$\lambda_{2}$&-0.4348&-0.3260&0&-0.3713&-0.3713&-0.1646&-0.2909&-0.3260\\
\hline
\end{tabular}\\\\
By Lemma \ref{le15}, $\lambda_{2}(G)\geq
\lambda_{2}(D_{G}(\{v_{1},v_{2},v_{3},v_{4},v_{5}\}))>-\frac{1}{2}.$
Note that $\lambda_{2}(G)<-\frac{1}{2},$ a contradiction. Hence
$P_{5}$ is a forbidden subgraph of $G$.

For $H_{2}$. Assume that $H_{2}$ is an induced subgraph of $G$, then
$d_{G}(v_{1},v_{4})\in \{2,3\}$. If $d_{G}(v_{1},v_{4})=3$, then
$D(H_{2})$ is a principal submatrix of $D(G)$. By Lemma \ref{le15},
we have $\lambda_{2}(G)\geq \lambda_{2}(H_{2})=-0.4521>-1/2$, a
contradiction. If $d_{G}(v_{1},v_{4})=2$, it is easy to calculate
that
$\lambda_{2}(D_{G}(\{v_{1},v_{2},v_{3},v_{4},v_{5}\}))=-0.2284>-1/2$.
By Lemma \ref{le15} and Theorem \ref{th1}, we also get a
contradiction. Therefore $H_{2}$ is a forbidden subgraph of $G$.

For $H_{3}$. Suppose that $H_{3}$ is an induced subgraph of $G$,
then $d_{G}(v_{1},v_{4})\in\{2,3\}$. If $d_{G}(v_{1},v_{4})=3$, then
$D(H_{3})$ is a principal submatrix of $D(G)$. By Lemma \ref{le15},
we have $\lambda_{3}(G)\geq \lambda_{3}(H_{3})=-0.8365>-1$, a
contradiction. If $d_{G}(v_{1},v_{4})=2$, it is easy to check that
$\lambda_{2}(D_{G}(\{v_{1},v_{2},v_{3},v_{4},v_{5}\}))=-0.3820>-1/2$.
By Lemma \ref{le15} and Theorem \ref{th1}, we also obtain a
contradiction. Hence $H_{3}$ is a forbidden subgraph of $G$.

For $H_{5}$. Assume that $H_{5}$ is an induced subgraph of $G$. If
$d_{G}(v_{1},v_{4})=d_{G}(v_{4},v_{5})=d_{G}(v_{4},v_{6})=3$, then
$D(H_{5})$ is a principal submatrix of $D(G)$. By Lemma \ref{le15},
we have $\lambda_{n-1}(G)\leq\lambda_{5}(H_{5})=-2.3224<-2$, a
contradiction. Otherwise, there exists at least one equal to 2 among
$d_{G}(v_{1},v_{4}), d_{G}(v_{4},v_{5})$ and $d_{G}(v_{4},v_{6}).$
Without loss of generality, we may assume that
$d_{G}(v_{1},v_{4})=2$. Note that $H_{5}$ is an induced subgraph of
$G$, then there exists a vertex $v\in V(G)\setminus
\{v_{1},v_{2},v_{3},v_{4},v_{5}\}$ such that $vv_{1}, vv_{4}\in
E(G).$ Then $G[vv_{1}v_{2}v_{3}v_{4}]=C_{5}$,
$G[vv_{1}v_{2}v_{3}v_{4}]=H_{1}$, $G[vv_{2}v_{3}v_{4}]=C_{4}$ or
$G[vv_{1}v_{2}v_{3}]=C_{4}$. By Lemma \ref{th4}, $C_{4}$, $C_{5}$
and $H_{1}$ are forbidden subgraphs of $G$, a contradiction. Hence
$H_{5}$ is a forbidden subgraph of $G$.

For $H_{6}$, $H_{7}$ and $H_{8}$. Suppose that they are induced
subgraphs of $G,$ respectively. If $D(H_{6})$, $D(H_{7})$ and
$D(H_{8})$ are principal submatrices of $D(G),$ respectively. By
Lemma \ref{le15}, $\lambda_{2}(G)\geq \lambda_{2}(H_{i})>-1/2$ where
$i\in\{6,7,8\}$, a contradiction. Otherwise, similar to the
discussion for $H_{5}$, we may also obtain the same contradictions.
Thus $H_{6}$, $H_{7}$ and $H_{8}$ are forbidden subgraphs of $G$. \qed
\end{proof}

\begin{theorem}\label{th5}
The graph $K^{h}_{n}$ is determined by its $D$-spectrum.
\end{theorem}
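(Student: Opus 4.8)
The plan is to take an arbitrary graph $G$ that is $D$-cospectral with $K^{h}_{n}$ and prove $G\cong K^{h}_{n}$. Since the distance matrices have the same order, $|V(G)|=n$, and by Lemmas~\ref{th4} and~\ref{th13} the graph $G$ contains none of $C_{4}$, $C_{5}$, $P_{5}$, $H_{1},\ldots,H_{13}$ as an induced subgraph. Two easy consequences will be used throughout. First, forbidding $C_{4}$, $C_{5}$ and $P_{5}$ forces $G$ to be chordal, since every induced cycle of length $\geq 6$ contains an induced $P_{5}$; in particular $\mathrm{diam}(G)\leq 3$ (a geodesic $P_{5}$ is induced). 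Second, being induced-$C_{4}$-free means that for any two non-adjacent vertices $x,y$ the set $N_{G}(x)\cap N_{G}(y)$ induces a clique, because two non-adjacent common neighbours of $x,y$ would produce an induced $C_{4}$.

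The heart of the argument is to upgrade these restrictions, together with the forbidden $H_{i}$, to the structural statement: \emph{$G$ has a vertex $u$ adjacent to all other vertices, and $G-u$ is a disjoint union of cliques and isolated vertices.} I would first produce a universal vertex: take a vertex $u$ of maximum degree; if $u$ misses some vertex $w$, then, using $\mathrm{diam}(G)\le 3$, the clique property above, and the list $H_{1},\ldots,H_{13}$, one forces a forbidden induced configuration around $u$, $w$ and their common neighbours. Once $u$ is universal we have $\mathrm{diam}(G)\le 2$ and may write $G=K_{1}\bigtriangledown(G-u)$. Now an induced $P_{3}$ in $G-u$ would give an induced ``diamond'' $K_{4}-e$ in $G$; the spectrum of $K_{4}-e$ alone is consistent with Theorem~\ref{th1}, so the point is that this diamond \emph{extends}: since $u$ is adjacent to every further vertex and $\mathrm{diam}(G)\le 2$ limits how such a vertex joins the diamond, one always lands in one of the forbidden $H_{i}$. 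Hence $G-u$ is $P_{3}$-free, i.e. each component is complete, so $G-u=K_{h_{1}}\cup\cdots\cup K_{h_{r}}\cup sK_{1}$ with all $h_{i}\ge 2$.

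To finish, combine two numerical facts. From $\mathrm{diam}(G)=2$ and Lemma~\ref{le20} (equivalently, from $\operatorname{tr}(D(G)^{2})=\operatorname{tr}(D(K^{h}_{n})^{2})$) we get $|E(G)|=|E(K^{h}_{n})|=\binom{h}{2}+(n-h)$, hence $\sum_{i}\binom{h_{i}}{2}=\binom{h}{2}+(n-h)-(n-1)=\binom{h-1}{2}$ and $\sum_{i}h_{i}+s=n-1$. On the other hand, two vertices lying in the same $K_{h_{i}}$ have equal closed neighbourhoods in $G$, so $e_{x}-e_{y}$ is an eigenvector of $D(G)$ for $-1$; thus $-1$ has distance-multiplicity at least $\sum_{i}(h_{i}-1)$, while by Theorem~\ref{th1} it has multiplicity exactly $h-2$, giving $\sum_{i}h_{i}\le h-2+r$. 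A short estimate (each $h_{i}\ge 2$, and $\sum_{i}\binom{h_{i}}{2}$ is largest when the sizes are concentrated, so $\binom{h-1}{2}\le\binom{h-r}{2}+(r-1)$, which forces $h\le 2$ unless $r=1$) shows $r=1$; then $h_{1}=h-1$, $s=n-h$, and $G-u\cong K_{h-1}\cup(n-h)K_{1}$, i.e. $G\cong K^{h}_{n}$.

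The main obstacle is the structural classification in the second paragraph: converting the interlacing-generated list of forbidden induced subgraphs into the rigid description ``universal vertex plus a disjoint union of cliques''. The subtlety is that the truly small obstructions one would like to exclude — the diamond $K_{4}-e$, a cleverly joined $K_{1,3}$, and so on — are \emph{not} individually ruled out by eigenvalue interlacing, so each must first be enlarged by one more vertex (available because $G$ is connected with $\mathrm{diam}(G)\le 3$) before it matches something on the forbidden list; making this case analysis complete and finite — which is presumably why the auxiliary graphs $H_{1},\ldots,H_{13}$, $B_{1},\ldots,B_{3}$ and their spectra are tabulated — is where the real work lies.
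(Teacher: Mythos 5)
Your overall strategy is the same as the paper's up to a point (interlacing gives the forbidden induced subgraphs of Lemmas~\ref{th4} and~\ref{th13}, then a structural classification), and your numerical finish is genuinely nice: once $G=K_{1}\bigtriangledown(K_{h_1}\cup\cdots\cup K_{h_r}\cup sK_{1})$ is known, matching the edge count with Lemma~\ref{le20} and matching the multiplicity $h-2$ of the eigenvalue $-1$ from Theorem~\ref{th1} does force $r=1$, $h_1=h-1$, and it replaces the paper's use of Corollary~\ref{co1} for excluding $K^{s,t}_{n}$. The problem is the step you yourself flag as the ``main obstacle'': the claim that the forbidden list forces a universal vertex and forces $G-u$ to be $P_3$-free is not just unproved, it is false as stated. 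The graphs $K^{s+t}_{n}$ (two cliques joined by an edge) and $B_1$ have diameter $3$, have no universal vertex, and contain none of $C_4$, $C_5$, $P_5$, $H_1,\ldots,H_{13}$ as induced subgraphs --- they cannot contain them, since by Theorem~\ref{th2} (and direct computation for $B_1$) they satisfy essentially the same eigenvalue constraints that generated the list, so no interlacing argument will ever exclude them. Hence ``if $u$ misses some vertex $w$ one forces a forbidden induced configuration'' cannot be carried out; the diameter-$3$ possibilities survive every forbidden-subgraph argument and in the paper are killed only by comparing actual spectra (Corollary~\ref{co1} for $K^{s+t}_{n}$, the tabulated spectrum for $B_1$). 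Your proposal never performs such a comparison, so the diameter-$3$ case is simply untreated.

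The same defect appears inside diameter $2$: your ``diamond extension'' step asserts that adding one more vertex to an induced $K_4-e$ always lands on a forbidden $H_i$, but it can land on $B_3$ (and for $n=4$ the diamond $B_2$ itself cannot be extended at all); these graphs avoid the whole forbidden list, have a universal vertex, and yet $G-u$ contains an induced $P_3$, so they contradict your structural statement and must again be eliminated spectrally or by a further, carefully finite, case analysis --- which is exactly what the paper's Claims 1--10 and Subcases 2.1--2.4 do, ending each branch ($B_1$, $B_2$, $B_3$, $K_{1,n-1}$, $K^{s+t}_{n}$, $K^{s,t}_{n}$) with a spectral comparison. In short: your endgame is fine, but the structural classification it rests on needs the diameter-path case analysis plus the spectral exclusions of the sporadic graphs, and purely subgraph-theoretic reasoning cannot supply it.
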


\begin{proof}
Let $G$ be a graph $D$-cospectral to $K^{h}_{n}$.
By Lemma \ref{th13}, $P_{5}$ is a forbidden graph of $G$, thus
$\mbox{diam}(G)\leq3.$ By $\lambda_{n}(G)<-2,$ then
$\mbox{diam}(G)\geq2.$

{\bf Case $1$.}\ \ $\mbox{diam}(G)=3.$

If $|V(G)|=4$, then $G=P_{4},$ it is easy to check that $G$ is not
$D$-cospectral to $K_{4}^{3}$, a contradiction. Next we assume that
$|V(G)|\geq5$. Note that $\mbox{diam}(G)=3,$ then there exists a
diameter-path $P=u\tilde{u}\tilde{v}v$ with length 3 in $G.$ Let
$X=\{u,\tilde{u},\tilde{v},v\}$, then $G[X]=P_{4}$. Denote by
$V_{i}~(i=0,1,2,3,4)$ the vertex subset of $V\backslash X$, whose
each vertex is adjacent to $i$ vertices of $X$. Clearly $V\backslash
X=\cup_{i=0}^{4}V_{i}.$

{\bf Claim $1$.} $V_{4}=\emptyset$.

Suppose not, then there exists a vertex $v_{4}\in V_{4}$ such that
$G[v_{4}u\tilde{u}\tilde{v}v]=H_{1}$, a contradiction. Hence Claim 1
holds.

{\bf Claim $2$.} $V_{3}=\emptyset$.

Suppose not, then there exists a vertex $v_{3}\in V_{3}$ such that
$v_{3}$ is adjacent to $\{u,\tilde{u},\tilde{v}\}$,
$\{\tilde{u},\tilde{v},v\}$, $\{u,\tilde{u},v\}$ or
$\{u,\tilde{v},v\}$. Then $G$ contains an induced subgraph $H_{2}$
or $C_{4}$, a contradiction.

Let $V_{2}^{u}=\{v_{2}\in V_{2} | v_{2}u, v_{2}\tilde{u}\in E(G)\}$
and $V_{2}^{v}=\{v_{2}\in V_{2} | v_{2}v, v_{2}\tilde{v}\in E(G)\}$.

{\bf Claim $3$.} $V_{2}=V_{2}^{u}\cup V^{v}_{2}$,
$G[V^{u}_{2}]~(G[V^{v}_{2}])=K_{|V^{u}_{2}|}~(K_{|V^{v}_{2}|})$ and
$E[V^{u}_{2},V^{v}_{2}]=\emptyset$.

For any $v_{2}\in V_{2},$ it is impossible that $v_{2}$ is adjacent
to $u$ and $v$ since $d_{G}(u,v)=3.$ If $v_{2}$ is adjacent to $u$
and $\tilde{v}$~(or $\tilde{u}$ and $v$), then
$G[v_{2}u\tilde{u}\tilde{v}]=C_{4}$~(or
$G[v_{2}\tilde{u}\tilde{v}v]=C_{4}$), by Lemma \ref{th4}, a
contradiction. If $v_{2}$ is adjacent to $\tilde{u}$ and
$\tilde{v}$, then $G[v_{2}u\tilde{u}\tilde{v}v]=H_{3}$, a
contradiction. Thus $V_{2}=V^{u}_{2}\cup V^{v}_{2}$. For any
$v_{2},v_{2}^{\star}\in V_{2}^{u}$, then $v_{2}v_{2}^{\star}\in
E(G).$ Otherwise $G[v_{2}v_{2}^{\star}u\tilde{u}\tilde{v}]=H_{4}$, a
contradiction. This means that $G[V^{u}_{2}]=K_{|V^{u}_{2}|}$.
Similarly, $G[V^{v}_{2}]=K_{|V^{v}_{2}|}.$ If $v_{2}v_{2}^{\star}\in
E(G)$ for any $v_{2}\in V^{u}_{2}$ and $v_{2}^{\star}\in V^{v}_{2}$,
then $G[v_{2}v_{2}^{\star}\tilde{u}\tilde{v}]=C_{4}$, a
contradiction. Hence $E[V^{u}_{2},V^{v}_{2}]=\emptyset$.

{\bf Claim $4$.} $|V_{1}|\leq 1$.

Let $v_{1}\in V_{1}$. Obviously, $v_{1}$ can only be adjacent to
$\tilde{u}$ or $\tilde{v}$, otherwise
$G[v_{1}u\tilde{u}\tilde{v}v]=P_{5}$, a contradiction. Now we assume
that $|V_{1}|\geq 2$. Let $v_{1},v_{1}^{\star}\in V_{1}$. If they
are adjacent to the same vertex of $X$, then
$G[v_{1}v_{1}^{\star}u\tilde{u}\tilde{v}v]=H_{5}$ or $H_{6}$, a
contradiction. Otherwise,
$G[v_{1}v_{1}^{\star}u\tilde{u}\tilde{v}v]=H_{7}$ or
$G[v_{1}v_{1}^{\star}\tilde{u}\tilde{v}]=C_{4}$, a contradiction.
Hence Claim 4 is completed.

{\bf Claim $5$.} Only one set is nonempty between $V_{1}$ and
$V_{2}$.

Suppose not, then there exist two vertices $v_{1}\in V_{1}$ and
$v_{2}\in V_{2}$. Without loss of generality, we may assume that
$v_{2}$ is adjacent to $u$ and $\tilde{u}$. If $v_{1}$ is adjacent
to $\tilde{u}$, then $G[v_{1}v_{2}u\tilde{u}\tilde{v}v]=H_{5}$ or
$G[v_{1}v_{2}u\tilde{u}\tilde{v}]=H_{4}$, a contradiction. If
$v_{1}$ is adjacent to $\tilde{v}$, then
$G[v_{1}v_{2}u\tilde{u}\tilde{v}v]=H_{8}$ or
$G[v_{1}v_{2}\tilde{u}\tilde{v}]=C_{4}$, a contradiction. Thus Claim
5 holds.

{\bf Claim $6$.} $V_{0}=\emptyset$.

Suppose not, then there exist a vertex $v_{0}\in V_{0} $ such that
$v_{0}v^{\star}\in E(G),$ where $v^{\star}\in V_{1}\cup V_{2}.$ Then
$G[v_{0}v^{\star}\tilde{u}\tilde{v}v]=P_{5}$ or
$G[v_{0}v^{\star}u\tilde{u}\tilde{v}]=P_{5}$, a contradiction.

By Claims 1-6, we have $V=V_{1}\cup V_{2}\cup X$. If $|V_{1}|=1$,
then by Claim 5, $V_{2}=\emptyset$. This means that $G\cong B_{1}$.
It is easy to check that $B_{1}$ has distinct $D$-spectrum with
$K_{5}^{h}$, a contradiction. So we have $V_{1}=\emptyset,$ then
$V_{2}\neq\emptyset$, and thus $G\cong K_{n}^{s+t}.$ By Corollary
\ref{co1}, $K_{n}^{s+t}$ has distinct $D$-spectrum with $K_{n}^{h},$
a contradiction. It follows that there is no graph $G$ with diameter
3 $D$-cospectral to $K_{n}^{h}.$

{\bf Case $2$.}\ \ $\mbox{diam}(G)=2.$

There exists a diameter-path $P=xyz$ with length 2 in $G.$ Let
$X=\{x,y,z\}$, then $G[X]=P_{3}$. Obviously, $V\backslash
X\neq\emptyset$ since $n\geq4$. Denote by $V_{i}~(i=0,1,2,3)$ the
vertex subset of $V\backslash X$, whose each vertex is adjacent to
$i$ vertices of $X$. Clearly $V\backslash X=\cup_{i=0}^{3}V_{i}.$

{\bf Claim $7$.} $|V_{3}|\leq 1$.

Suppose not, there exist two vertices $v_{3}, v_{3}^{\star}\in
V_{3}$. If $v_{3}v_{3}^{\star}\in E(G)$,
$G[v_{3}v_{3}^{\star}xyz]=H_{9}$, a contradiction. Otherwise
$v_{3}v_{3}^{\star}\not\in E(G)$, then
$G[v_{3}v_{3}^{\star}xz]=C_{4}$, a contradiction. Therefore Claim 7
holds.

Let $V_{xy}=\{v_{2}\in V_{2} | v_{2}x,v_{2}y\in E(G)\}$,
$V_{yz}=\{v_{2}\in V_{2} | v_{2}y,v_{2}z\in E(G)\}.$

{\bf Claim $8$.} $V_{2}=V_{xy}\cup V_{yz}$,
$G[V_{xy}]~(G[V_{yz}])=K_{|V_{xy}|}~(K_{|V_{yz}|}),$ and
$E[V_{xy},V_{yz}]=\emptyset$.

For any $v_{2}\in V_{2},$ it is impossible that $v_{2}$ is adjacent
to $x$ and $z$ since $G[v_{2}xyz]=C_{4}.$ Hence $V_{2}=V_{xy}\cup
V_{yz}.$ For any $v_{2},v_{2}^{\star}\in V_{xy}$, then
$v_{2}v_{2}^{\star}\in E(G).$ Otherwise
$G[v_{2}v_{2}^{\star}xyz]=H_{4}$, a contradiction. This means that
$G[V_{xy}]=K_{|V_{xy}|}.$ Similarly, $G[V_{yz}]=K_{|V_{yz}|}.$ If
$E[V_{xy}, V_{yz}]\neq\emptyset$, then there exist two vertices
$v_{2}\in V_{xy}$ and $v_{2}^{\star}\in V_{yz}$ such that
$v_{2}v_{2}^{\star}\in E(G)$, and thus
$G[v_{2}v_{2}^{\star}xyz]=H_{1}$, a contradiction. Hence
$E[V_{xy},V_{yz}]=\emptyset.$

{\bf Claim $9$.} If $v_{1}\in V_{1}$, then $v_{1}$ must be adjacent
to $y$.

Suppose not, then $v_{1}$ is adjacent to $x$ or $z$. Without loss of
generality, we may assume that $v_{1}x\in E(G)$. Note that
$\mbox{diam}(G)=2,$ then there exists a vertex $u\in V\backslash X $
such that $uv_{1},uz\in E(G)$, and thus $u\in \cup_{i=1}^{3}V_{i}.$
If $u\in V_{1}$, then $G[uv_{1}xyz]=C_{5}$, a contradiction. If
$u\in V_{2}$, by Claim 8, $u$ is adjacent to $y$ and $z$, and then
$G[uv_{1}xy]=C_{4}$, a contradiction. If $u\in V_{3}$, then
$G[uv_{1}xyz]=H_{1}$, a contradiction. Thus Claim 9 holds.

{\bf Claim $10$.} $V_{0}=\emptyset.$

Suppose not, then there exists a vertex $v_{0}\in V_{0}$ such that
$v_{0}$ is adjacent to some vertices of $V_{1}\cup V_{2} \cup
V_{3}.$ If $v_{0}$ is adjacent to only one vertex $u$ of $V_{1}\cup
V_{2} \cup V_{3},$ then $u\in V_{3}$ since $\mbox{diam}(G)=2,$ and
thus $G[v_{0}uxyz]=H_{4},$ a contradiction. So $v_{0}$ must be
adjacent to at least two vertices of $V_{1}\cup V_{2} \cup V_{3},$
we always find an induced subgraph $C_{4}$ of $G$ at each case, a a
contradiction. Therefore Claim 10 is obtained.

By Claim 10, $\emptyset\neq V\backslash X=\cup_{i=1}^{3}V_{i}.$ Next
we distinguish the following four cases.

{\bf Subcase $2.1$.}\ \ $V_{3}\neq\emptyset$.

By Claim 7, $|V_{3}|=1$. Note that $H_{4}$ and $H_{10}$ are
forbidden subgraphs of $G$, then $V_{1}=\emptyset$. Let
$V_{3}=\{v_{3}\}$. Obviously, $v_{2}v_{3}\in E(G)$ for each
$v_{2}\in V_{2}.$ Otherwise $G[v_{2}v_{3}xyz]=H_{1}$, a
contradiction. If $|V_{2}|\leq 2,$ i.e., there exist two vertices
$v_{2}, v_{2}^{\star}\in V_{2}$, then $G[v_{2}
v_{2}^{\star}v_{3}xyz]=H_{11}$ or $H_{12}$, a contradiction. So we
have $|V_{2}|\leq 1$. If $V_{2}=\emptyset$, then $G\cong B_{2}$, it
is easy to check that $B_{2}$ has distinct distance spectrum with
$K_{4}^{3},$ a contradiction. If $|V_{2}|=1$, then $G\cong B_{3}$.
Clearly, $B_{3}$ is not $D$-cospectral to $K^{h}_{5}$, a
contradiction.

{\bf Subcase $2.2$.}\ \ $V_{3}=\emptyset$, $V_{2}\neq\emptyset$ and
$V_{1}=\emptyset$.

By Claim 8, $G\cong K_{n}^{n-1}$ or $G\cong K^{s,t}_{n}$. By
Corollary \ref{co1}, $K^{s,t}_{n}$ and $K^{h}_{n}$ have distinct
distance spectra, a contradiction. Then $G\cong K_{n}^{n-1}$.

{\bf Subcase $2.3$.}\ \ $V_{3}=\emptyset$, $V_{2}\neq\emptyset$ and
$V_{1}\neq\emptyset$.

For any $v_{1}\in V_{1},$ we claim that $d(v_{1})=1.$ In fact, if
$d(v_{1})\geq 2,$ then there exists a vertex $v_{2}\in V_{2}$ such
that $v_{1}v_{2}\in E(G),$ and then $G[v_{1}v_{2}xyz]=H_{4},$ a
contradiction. Furthermore, we claim that only one set is nonempty
between $V_{xy}$ and $V_{yz}.$ Otherwise, let $v_{2}\in V_{xy}$ and
$v_{2}^{\star}\in V_{yz},$ then $G[v_{2}v_{2}^{\star}xyz]=H_{13},$ a
contradiction. Hence $G\cong K_{n}^{h}.$

{\bf Subcase $2.4$.}\ \ $V_{3}=\emptyset$, $V_{2}=\emptyset$ and
$V_{1}\neq\emptyset$.

Let $V_{1}^{\star}=\{v\in V_{1}|d(v)\geq2\}$. If
$V_{1}^{\star}=\emptyset$, then $G\cong K_{1,n-1}$. Note that
$\lambda_{n}(K_{1,n-1})=-2,$ then $K_{1,n-1}$ is not $D$-cospectral
to $K^{h}_{n},$ a contradiction. If $V_{1}^{\star}\neq\emptyset$, we
claim that $G[V_{1}^{\star}]=K_{|V_{1}^{\star}|}$. If not, there
exist $u,v\in V_{1}^{\star}$ such that $uv\not\in E(G)$. If there
exists a vertex $w\in V_{1}^{\star}$ such that $wu,wv\in E(G)$, then
$G[wuvxy]=H_{4}$, a contradiction. Otherwise, there exist two
distinct vertices $w_{1}\in V_{1}^{\star}$ and $w_{2}\in
V_{1}^{\star}$ such that $w_{1}u\in E(G)$ and $w_{2}v\in E(G)$, then
$w_{1}w_{2}\in E(G)$ since $H_{13}$ is a forbidden subgraph of $G$.
Thus $G[w_{1}w_{2}uvy]=H_{1}$, a contradiction. Hence
$G[V_{1}^{\star}]=K_{|V_{1}^{\star}|}$, it means that $G\cong
K_{n}^{h}.$ \qed
\end{proof}

\begin{theorem}\label{th6}
The graph $K^{s+t}_{n}$ is determined by its $D$-spectrum.
\end{theorem}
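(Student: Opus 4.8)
The plan is to follow exactly the same template used for Theorem~\ref{th5}, exploiting the spectral data already established in Theorem~\ref{th2} and the table of small graphs in Fig.~2. Let $G$ be a graph $D$-cospectral to $K^{s+t}_{n}$. By Theorem~\ref{th2} we know the key constraints: $-1<\lambda_{2}(G)<-\tfrac12$, $\lambda_{3}(G)=-1$, $-2<\lambda_{n-1}(G)<-1$ and $\lambda_{n}(G)<-2$. The first step is the diameter bound. Since $P_{5}$ has $\lambda_{3}(P_{5})=-0.7639>-1$, a refined forbidden-subgraph analysis (as in Lemmas~\ref{th4} and~\ref{th13}, checking the cases $d_G(v_1,v_5)\in\{2,3\}$ by hand) shows $\mbox{diam}(G)\le 3$; and $\lambda_n(G)<-2$ forces $\mbox{diam}(G)\ge 2$ (a $\mbox{diam}\le 1$ graph is complete, whose least distance eigenvalue is $-1$). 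So $\mbox{diam}(G)\in\{2,3\}$, and we split into these two cases.

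For $\mbox{diam}(G)=3$, I would take a diametral path $P=u\tilde u\tilde v v$ with $G[X]=P_4$ where $X=\{u,\tilde u,\tilde v,v\}$, and partition $V\setminus X=\bigcup_{i=0}^4 V_i$ by the number of neighbours in $X$. The forbidden subgraphs $C_4$, $C_5$, $H_1$, $H_4$, $H_9$--$H_{13}$ (eigenvalue obstructions: $\lambda_2\ge-\tfrac12$ for $C_4,C_5,H_1,H_9,H_{10},H_{11},H_{12}$; $\lambda_3>-1$ for $H_4$; $\lambda_{m-1}<-2$ for $H_{13}$) and $P_5$, $H_2$, $H_3$, $H_5$--$H_8$ are all available by the same arguments as in Lemmas~\ref{th4} and~\ref{th13}, since those proofs only used $-1<\lambda_2<-\tfrac12$, $\lambda_3=-1$, and $\lambda_{n-1}\ge-2$, all of which still hold here. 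Running the same claims (Claims~1--6 of Theorem~\ref{th5}) gives $V_4=V_3=\emptyset$, $V_2=V_2^u\cup V_2^v$ with each part a clique and no edges between, $|V_1|\le 1$, at most one of $V_1,V_2$ nonempty, $V_0=\emptyset$. The surviving possibilities are: $G\cong B_1$ (ruled out by direct spectrum comparison), or $V_1=\emptyset$, $V_2\neq\emptyset$, which forces $G$ to be exactly $K_n^{s'+t'}$ for some split $s'+t'=n$; by the injectivity of the characteristic polynomial in Theorem~\ref{th2} (or Corollary~\ref{co1}), $D$-cospectrality then forces $\{s',t'\}=\{s,t\}$, i.e. $G\cong K^{s+t}_{n}$.

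For $\mbox{diam}(G)=2$, I would take $X=\{x,y,z\}$ with $G[X]=P_3$ and partition $V\setminus X=\bigcup_{i=0}^3 V_i$, and re-run Claims~7--10 of Theorem~\ref{th5} verbatim (they again only use the eigenvalue constraints and the forbidden subgraphs $C_4,C_5,H_1,H_4,H_9,H_{10},H_{11},H_{12}$): $|V_3|\le 1$, $V_2=V_{xy}\cup V_{yz}$ with each part a clique and no edges between, any $v_1\in V_1$ is adjacent only to $y$, and $V_0=\emptyset$. Then I split on whether $V_3,V_2,V_1$ are empty. The subcase $V_3\neq\emptyset$ collapses (as before) to $G\cong B_2$ or $B_3$, excluded by direct computation; the subcase $V_1\neq\emptyset$ with a vertex of degree $\ge 2$ produces $H_4$; and the pure cases give $G\cong K_{1,n-1}$ (excluded since $\lambda_n=-2$), $G\cong K_n^{n-1}=K^h_n$-type or $K^{s,t}_n$-type or $K^{s'+t'}_n$. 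The ones isomorphic to $K^h_n$ or $K^{s,t}_n$ are killed by Corollary~\ref{co1}, and among the $K^{s'+t'}_n$ only $\{s',t'\}=\{s,t\}$ survives by Theorem~\ref{th2}. Hence $G\cong K^{s+t}_{n}$ in all cases.

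The main obstacle is bookkeeping rather than conceptual: one must verify that every forbidden-subgraph argument from Lemmas~\ref{th4} and~\ref{th13} still applies when the bound $\lambda_{n-1}\in\{-1,-2\}$ (used for $K^h_n$) is replaced by the strict bound $-2<\lambda_{n-1}<-1$ (for $K^{s+t}_n$) — in particular confirming that the slightly different pinch at $\lambda_{n-1}$ does not open up any new graph in the case analysis, and handling the endgame cases where $G$ could a priori be of type $K^h_n$ or $K^{s,t}_n$, which is exactly what Corollary~\ref{co1} is designed to dispatch. A secondary nuisance is the handful of small sporadic graphs ($B_1,B_2,B_3$ and $P_4$) that must be eliminated by explicit eigenvalue computation using the table in Fig.~2.
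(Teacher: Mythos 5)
Your proposal is correct and follows essentially the same route as the paper: the paper's proof likewise imports the spectral data of Theorem~\ref{th2}, observes that the forbidden-subgraph arguments of Lemmas~\ref{th4} and~\ref{th13} and the case analysis of Theorem~\ref{th5} go through verbatim (since they only use $-1<\lambda_2<-\tfrac12$, $\lambda_3=-1$, $\lambda_{n-1}\ge-2$ and $\lambda_n<-2$), and then eliminates $B_1,B_2,B_3$, $K^h_n$ and $K^{s,t}_n$ via their spectra and Corollary~\ref{co1}. The only cosmetic difference is that you list a $K^{s'+t'}_n$ possibility in the diameter-$2$ case, which is vacuous (those graphs have diameter $3$) and harmless.
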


\begin{proof}
Let $G$ be a graph $D$-cospectral to $K^{s+t}_{n}$. From Theorem
\ref{th2}, we know that $-1<\lambda_{2}(G)<-\frac{1}{2}$,
$\lambda_{3}(G)=-1$ and $-2<\lambda_{n-1}(G)<-1$. Similar to the
proof of Lemmas \ref{th4} and \ref{th13}, we also get $P_{5}$,
$C_{4}$, $C_{5}$ and $H_{i}~(i=1,2,\ldots,13)$ are forbidden
subgraphs of $G$. Note that $P_{5}$ is a forbidden subgraph of $G$
and $\lambda_{n}(G)<-2$, then $2\leq\mbox{diam}(G)\leq3.$ By the
above forbidden subgraphs, similar to the proof of Theorem
\ref{th5}, we have:

$\bullet$ If $\mbox{diam}(G)=3$, then $G\cong B_{1}$ or $G\cong
K^{s+t}_{n}$.

$\bullet$ If $\mbox{diam}(G)=2$, then $G\cong B_{2},$ $G\cong
B_{3},$ $G\cong K^{h}_{n}$ or $G\cong K^{s,t}_{n}.$

From $D$-spectra of $B_{i}~(i=1,2,3)$ and Corollary \ref{co1}, then
we must have $G\cong K^{s+t}_{n}.$ Thus the theorem follows. \qed
\end{proof}

\begin{theorem}\label{th7}
The graph $K^{s,t}_{n}$ is determined by its $D$-spectrum.
\end{theorem}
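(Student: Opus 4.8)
The plan is to mirror the proof of Theorem~\ref{th5}, since $K^{s,t}_{n}$ has diameter $2$ just like $K^{h}_{n}$ and $K^{s+t}_{n}$. Let $G$ be a graph $D$-cospectral to $K^{s,t}_{n}$. First I would record from Theorem~\ref{th3} the spectral data: $-1<\lambda_{2}(G)<-\frac{2}{3}$, $\lambda_{3}(G)=\lambda_{n-1}(G)=-1$ and $\lambda_{n}(G)<-2$. The crucial extra piece of information here, compared with the $K^{h}_{n}$ case, is that $-1$ has very high multiplicity: it equals $\lambda_{3}=\cdots=\lambda_{n-1}$, i.e.\ multiplicity $n-3$. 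Since $\lambda_{n}(G)<-2$ forces $\mbox{diam}(G)\geq 2$, and since the interlacing bound $\lambda_{2}(G)<-\frac{2}{3}$ together with $\lambda_{3}(G)\leq -1$ rules out $P_{5}$ (whose relevant eigenvalues exceed these thresholds, exactly as in Lemma~\ref{th13}), we get $\mbox{diam}(G)\leq 3$.

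Next I would re-run the forbidden-subgraph analysis. Using Lemma~\ref{le15} against the table in Fig.~2 and the bounds $\lambda_{2}(G)<-\frac{2}{3}$, $\lambda_{3}(G)\leq -1$, $\lambda_{n-1}(G)=-1$, $\lambda_{n}(G)<-2$, one checks that $P_{4}$ is \emph{also} forbidden: $\lambda_{4}(P_{4})=-3.4142$ and more to the point $\lambda_{3}(P_{4})=-1.1623<-1$ is fine but $\lambda_{1}$--$\lambda_{3}$ interlacing still applies — actually the cleanest observation is that $\lambda_{n-1}(G)=-1$ while any induced subgraph giving a principal submatrix $D(P_{4})$ would force $\lambda_{n-1}(G)\leq\lambda_{3}(P_4)=-1.1623<-1$ when $\mbox{diam}=3$, a contradiction; hence $\mbox{diam}(G)=2$. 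So the diameter-$3$ case is eliminated outright, which is a genuine simplification over Theorem~\ref{th5}. Then, exactly as in Case~2 of the proof of Theorem~\ref{th5}, pick a diameter path $P=xyz$, set $X=\{x,y,z\}$, and partition $V\setminus X$ into $V_{0},V_{1},V_{2},V_{3}$ by adjacency count to $X$. The forbidden subgraphs $C_{4}$, $C_{5}$, $H_{1}$, $H_{4}$, $H_{9}$--$H_{13}$ (verified against the table, since their relevant eigenvalues violate one of our bounds) drive Claims~7--10 verbatim, giving $V_{0}=\emptyset$, $|V_{3}|\leq 1$, $V_{2}=V_{xy}\cup V_{yz}$ with each part a clique and no edges between, and every vertex of $V_{1}$ adjacent only to $y$.

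I would then split into the same subcases. The key additional constraint is the multiplicity of $-1$: whenever the structure analysis produces a candidate graph (the analogues of $B_{2}$, $B_{3}$, $K^{h}_{n}$, $K^{s+t}_{n}$, $K_{1,n-1}$, or $K^{s,t}_{n}$ itself), I compare its $D$-spectrum with that of $K^{s,t}_{n}$. By Corollary~\ref{co1}, neither $K^{h}_{n}$ nor $K^{s+t}_{n}$ is $D$-cospectral to $K^{s,t}_{n}$; $K_{1,n-1}$ has $\lambda_{n}=-2$, not $<-2$; and $B_{2},B_{3}$ are excluded by direct computation against the table in Fig.~2. What remains, in the subcase $V_{3}=\emptyset$, $V_{2}\neq\emptyset$ and $V_{1}=\emptyset$, is that $G$ is obtained from the $P_{3}$ by attaching two cliques $V_{xy}\cup\{x\}$ through $x$ and $V_{yz}\cup\{z\}$ through $z$... more precisely, the only surviving graph is $K^{s,t}_{n}$ (two cliques sharing the vertex $y$), and in the subcases with $V_{1}\neq\emptyset$ one shows $V_{1}^{\star}=\{v\in V_1: d(v)\geq 2\}$ must induce a clique attached at $y$, again forcing $G\cong K^{s,t}_{n}$ — unless it forces $G\cong K^{h}_{n}$, which is then killed by Corollary~\ref{co1}.

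The main obstacle I anticipate is purely bookkeeping: making sure every candidate graph thrown up by the $V_{0},\dots,V_{3}$ analysis is actually $D$-cospectral-tested, and that the interlacing inequalities with the (now tighter) bound $\lambda_{2}<-\frac{2}{3}$ still forbid everything in Lemmas~\ref{th4} and~\ref{th13}; since $-\frac{2}{3}<-\frac{1}{2}$, the bound is \emph{stronger}, so every subgraph forbidden in the $K^{h}_{n}$ analysis via $\lambda_{2}>-\frac{1}{2}$ remains forbidden here, and the only real verification is for the handful of graphs where $-\frac12$ was not the operative threshold (e.g.\ $H_{4}$ via $\lambda_3>-1$, $H_{13}$ via $\lambda_{m-1}<-2$, and the new exclusion of the diameter-$3$ graphs $B_{1}$, $K^{s+t}_{n}$ via $\lambda_{n-1}=-1$). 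Modulo that routine checking, the argument is a transcription of the proof of Theorem~\ref{th5} with the diameter-$3$ branch collapsed and Corollary~\ref{co1} doing the final separation. \qed
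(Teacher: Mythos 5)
Your proposal is correct, and apart from one branch it is the same argument the paper gives: reuse the forbidden subgraphs of Lemmas \ref{th4} and \ref{th13} (legitimate here because $\lambda_{2}<-\frac{2}{3}<-\frac{1}{2}$, $\lambda_{3}=-1$ and $\lambda_{n-1}=-1>-2$ by Theorem \ref{th3}), rerun the diameter-$2$ structural analysis of Theorem \ref{th5}, and kill the surviving candidates $B_{2}$, $B_{3}$, $K_{1,n-1}$ and $K^{h}_{n}$ by direct spectral comparison and Corollary \ref{co1}. The genuine difference is your treatment of diameter $3$: you observe that a diameter path of length $3$ yields $D(P_{4})$ as a principal submatrix of $D(G)$, so Lemma \ref{le15} gives $\lambda_{n-1}(G)\leq\lambda_{3}(P_{4})=-1.1623<-1=\lambda_{n-1}(G)$, which collapses that branch immediately; this is valid (note it is the diameter-$3$ hypothesis, not ``$P_{4}$ is a forbidden induced subgraph,'' that does the work --- an induced $P_{4}$ with endpoints at distance $2$ would not give the submatrix $D(P_{4})$, and your final formulation correctly uses only the diameter assumption). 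The paper instead carries out the full diameter-$3$ case of Theorem \ref{th5}, concluding $G\cong B_{1}$ or $G\cong K^{s+t}_{n}$, and then eliminates these via the spectrum of $B_{1}$ and Corollary \ref{co1}; your shortcut buys a shorter proof for this theorem (it exploits $\lambda_{n-1}=-1$, which is special to $K^{s,t}_{n}$ and unavailable for $K^{s+t}_{n}$, whose own diameter is $3$), while the paper's route simply recycles the already-written case analysis. Either way the conclusion $G\cong K^{s,t}_{n}$ follows, so the proposal stands, modulo the routine spectral checks you yourself flag.
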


\begin{proof}
Let $G$ be a graph $D$-cospectral to $K^{s,t}_{n}$. By Theorem
\ref{th3}, then $-1<\lambda_{2}(G)<-\frac{2}{3}<-\frac{1}{2}$,
$\lambda_{3}(G)=\lambda_{n-1}(G)=-1$. Hence we can still use
$P_{5}$, $C_{4}$, $C_{5}$ and $H_{i}~(i=1,2,\ldots,13)$ as the
forbidden subgraph of $G.$ Note that $P_{5}$ is a forbidden subgraph
of $G$ and $\lambda_{n}(G)<-2$, then $2\leq\mbox{diam}(G)\leq3.$
Similar to the proof of Theorem \ref{th5}, then

$\bullet$ If $\mbox{diam}(G)=3$, then $G\cong B_{1}$ or $G\cong
K^{s+t}_{n}$.

$\bullet$ If $\mbox{diam}(G)=2$, then $G\cong B_{2},$ $G\cong
B_{3},$ $G\cong K^{h}_{n}$ or $G\cong K^{s,t}_{n}.$

By $D$-spectra of $B_{i}~(i=1,2,3)$ and Corollary \ref{co1}, then
$G\cong K^{s,t}_{n}.$ Thus $K^{s,t}_{n}$ is determined by its
$D$-spectrum. \qed
\end{proof}

In \cite{LRF}, Liu et al. give the
distance characteristic polynomial of $K^{n_{1},n_{2},\ldots,
n_{k}}_{n}$:
$$P_{D}(\lambda)=(\lambda+1)^{n-k-1}(\lambda-\sum_{i=1}^{k}\frac{n_{i}(2\lambda+1)}{\lambda+n_{i}+1})\prod_{i=1}^{k}(\lambda+n_{i}+1).$$

Next, we will show that $K^{n_{1},n_{2},\ldots, n_{k}}_{n}~(1\leq n_{i}\leq 2)$ is
determined by its $D$-spectrum.

\begin{center}
\setlength{\unitlength}{1.0mm}
\begin{picture}(112,14)

\put(10,0){\line(1,0){24}}\put(44,0){\line(1,0){24}}\put(78,0){\line(1,0){24}}
\put(22,0){\line(0,1){12}}\put(56,0){\line(0,1){12}}\put(90,0){\line(0,1){12}}
\put(44,0){\line(2,1){12}}\put(78,0){\line(2,1){12}}\put(102,0){\line(-2,1){12}}
\put(10,0){\circle*{1}}\put(22,0){\circle*{1}}\put(34,0){\circle*{1}}\put(22,6){\circle*{1}}
\put(22,12){\circle*{1}}\put(44,0){\circle*{1}}\put(56,0){\circle*{1}}\put(68,0){\circle*{1}}
\put(78,0){\circle*{1}}\put(90,0){\circle*{1}}\put(102,0){\circle*{1}}
\put(56,6){\circle*{1}}\put(90,6){\circle*{1}}\put(56,12){\circle*{1}}\put(90,12){\circle*{1}}
\put(9,-4){$x$}\put(43,-4){$x$}\put(77,-4){$x$}
\put(21,-4){$y$}\put(55,-4){$y$}\put(89,-4){$y$}
\put(33,-4){$z$}\put(67,-4){$z$}\put(101,-4){$z$}
\put(23,12){$v$}\put(57,12){$v$}\put(91,12){$v$}
\put(23,6){$w$}\put(57,6){$w$}\put(91,6){$w$}
\put(20,-12){$T_{1}$}\put(54,-12){$T_{2}$}\put(88,-12){$T_{3}$}
\end{picture}
\vskip 1.5cm Fig. $5$. Graphs $T_{1}-T_{3}$.
\end{center}

\begin{theorem}\label{th8}
$K^{n_{1},n_{2},\ldots, n_{k}}_{n}~(1\leq n_{i}\leq 2)$ is determined by its D-spectrum.
\end{theorem}

\begin{proof}
Let $G:=K^{n_{1},n_{2},\ldots, n_{k}}_{n},$ where $1\leq n_{i}\leq 2.$ Let $t_{1}$ and $t_{2}$ be two nonnegative integers with $t_{1}+t_{2}=k$. Suppose that  $n_{1}=\cdots=n_{t_{1}}=1$ and $n_{t_{1}+1}=\cdots=n_{t_{1}+t_{2}}=2$. Clearly, if $t_{1}=0,$ then $G$ is the friendship graph $F_{n}^{k}$. If $t_{2}=0,$ then $G$ is a star. Recall that the star is determined by its $D$-spectrum. So we assume that $t_{2}\geq 1$. Note that the distance characteristic polynomial of $G$ is
$$P_{D}(\lambda)=(\lambda+1)^{n-t_{1}-t_{2}-1}(\lambda+2)^{t_{1}-1}(\lambda+3)^{t_{2}-1}(\lambda^{3}+(5-4t_{2}-2t_{1})\lambda^{2}+(6-10t_{2}-7t_{1})\lambda-3t_{1}-4t_{2}).$$
Consider the cubic function
$$f(\lambda)=\lambda^{3}+(5-4t_{2}-2t_{1})\lambda^{2}+(6-10t_{2}-7t_{1})\lambda-3t_{1}-4t_{2}.$$
By calculation, we have
\[
\left\{
\begin{array}{l}
f(0)=-3t_{1}-4t_{2}<0,\\
f(-\frac{1}{2})=-\frac{15}{8},\\
f(-1)=2t_{1}+2t_{2}-2\geq 0,\\
f(-2)=3t_{1}\geq 0,\\
f(-3)=-10t_{2}<0.
\end{array}
\right.
\]
Then the three roots of $f(\lambda)=0$ belong to the intervals $(0, +\infty)$, $[-1,-\frac{1}{2})$ and $(-3,-2]$, respectively. Consequently, we have $-1\leq \lambda_{2}(G)<-\frac{1}{2}$, $\lambda_{3}(G)=-1$ and $\lambda_{n}(G)=-3$.

Suppose that $G'$ is $D$-cospectral to $G$, that is $-1\leq \lambda_{2}(G')<-\frac{1}{2}$, $\lambda_{3}(G')=-1$ and $\lambda_{n}(G')=-3$. In the following, we only need show that $G'\cong G$. It is easy to see that $G'$ can not contain $P_{4}$ as an induced subgraph, otherwise we have $\lambda_{n}(G')\leq\lambda_{4}(P_{4})=-3.4142$, which contradicts $\lambda_{n}(G')=-3$. Thus the diameter of $G'$ is 2. Let $P=xyz$ be a diameter path of $G'$.

\noindent{\bf Claim $1$.} $d_{G'}(y)=n-1.$

If there exists a vertex $v\in V(G')$ such that $vy\notin E(G'),$ then
$d_{G'}(v,y)=2$, and thus
\begin{equation*}\begin{split}
D_{G'}(\{x,y,z,v\})&=\left(\begin{array}{cccccccc}
0&1&2&a\\
1&0&1&2\\
2&1&0&b\\
a&2&b&0\\
\end{array}\right).\\
\end{split}\end{equation*}
Then $a,b\in\{1,2\}$, by a simple calculation, we have
\setlength{\tabcolsep}{29pt} 
\renewcommand\arraystretch{1.1}  
\begin{center}
\begin{tabular}{ccccc}
\hline
$(a,b)$ & $(1,1)$ & $(1,2)$ & $(2,1)$ & $(2,2)$\\
\hline
$\lambda_{2}$&0.0000&-0.3820&-0.3820&-0.6519\\
\hline
\end{tabular}
\end{center}
By Lemma \ref{le15}, only the case $a=2, b=2$ satisfies $\lambda_{2}(G')<-\frac{1}{2}$. Thus there exists a vertex $w$
such that the subgraph of $G'$ induced by vertices $v,w,x,y,z$ is
$T_{1}$, $T_{2}$ or $T_{3}$ (see Fig. 5). We get a principal
submatrix of $D(G')$ for each case.
\begin{equation*}\begin{split}
D_{1}=\left(\begin{array}{ccccccc}
0&1&2&2&2\\
1&0&1&1&2\\
2&1&0&2&2\\
2&1&2&0&1\\
2&2&2&1&0\\
\end{array}\right),
D_{2}=\left(\begin{array}{ccccccc}
0&1&2&1&2\\
1&0&1&1&2\\
2&1&0&2&2\\
1&1&2&0&1\\
2&2&2&1&0\\
\end{array}\right),
D_{3}=\left(\begin{array}{ccccccc}
0&1&2&1&2\\
1&0&1&1&2\\
2&1&0&1&2\\
1&1&1&0&1\\
2&2&2&1&0\\
\end{array}\right).
\end{split}\end{equation*}\\
A simple calculation gives $\lambda_{2}(D_{1})=-0.2248$,
$\lambda_{2}(D_{2})=-0.3820$ and $\lambda_{3}(D_{3})=-0.7667$. For
each case, Cauchy interlacing theorem implies $\lambda_{2}(G')\geq\lambda_{2}(D_{1})=-0.2248$,
$\lambda_{2}(G')\geq\lambda_{2}(D_{2})=-0.3820$ and
$\lambda_{3}(G')\geq\lambda_{3}(D_{3})=-0.7667$, a contradiction.
Thus Claim 1 holds.

\noindent{\bf Claim $2$.} $G'-y$ is the disjoint union of some cliques.

According to Lemma \ref{le18}, we obtain $G'$ has $n-1+t_{2}$ edges. It follows from Claim 1 that $G'-y$ has $t_{2}$ edges. Since $t_{2}\leq \lfloor\frac{n-1}{2}\rfloor$, there are at least two connected components in $G'-y$. Suppose that there is a component which is not a clique. Then we can see that $H_{4}$ is an induced subgraph of $G'$. Therefore $\lambda_{3}(G')\geq \lambda_{3}(H_{4})=-0.7667$, a contradiction. Thus Claim 2 holds.

Combining Claims 1 and 2, we have $G'\cong K_{1}\vee(K_{n_{1}^{'}}\cup K_{n_{2}^{'}}\cup\cdots\cup K_{n_{t}^{'}})$. According to the distance characteristic polynomial of $G$ and $G'$, we have $t=k$ and $n_{i}^{'}=n_{i},$ i.e. $G'\cong G$, as desired.\hspace*{\fill}$\Box$
\end{proof}

The following result follows from Theorem \ref{th8} immediately.

\begin{corollary}{\bf (\cite{LL})}\label{co10}
The friendship graph $F_{n}^{k}$ is determined by its $D$-spectrum.
\end{corollary}

\noindent{\bf Acknowledgment}

The authors would like to thank the anonymous referees very much for
valuable suggestions and corrections which improve the original
manuscript.

\small {

}
\end{document}